\newtheorem{theorem}{Theorem}[section]
\newtheorem{lemma}[theorem]{Lemma}
\newtheorem{proposition}[theorem]{Proposition}
\newtheorem{corollary}[theorem]{Corollary}
\theoremstyle{definition}
\newtheorem{definition}[theorem]{Definition}
\theoremstyle{remark}
\newtheorem{remark}[theorem]{Remark}
\numberwithin{equation}{section}
\numberwithin{equation}{section}
\newsavebox{\savepar}
\begin{document}
\title{\bf A study of second order semilinear elliptic PDE involving measures}
\author{Ratan Kr Giri \footnote{Corresponding author, Date: \today.} \& Debajyoti Choudhuri   \\
 {\it{\small Department of Mathematics, National Institute of Technology, Rourkela, India }}\\
{\it{\small e-mails: giri90ratan@gmail.com,
dc.iit12@gmail.com}}}

\date{}
\maketitle
\begin{abstract}
\noindent The objective of this article is to study the boundary value problem for the general semilinear elliptic equation of second order involving $L^1$ functions or Radon measures with finite total variation. The study investigates the existence and uniqueness of `{\it very weak}'  solutions to the boundary value problem for a given $L^1$ function. However, a `{\it very weak}'  solution need not exist when an $L^1$ function is replaced with a measure due to which the corresponding reduced limits has been found for which the problem admits a solution in a `{\it very weak}' sense.
\end{abstract}
\noindent \textbf{Keywords:} Elliptic PDE; Reduced limit; Good measure; Sobolev space. \\
\textbf{Mathematics subject classification (2010):} 35J25, 35J60, 35J92.

\section{Introduction and preliminaries}
Solving PDEs with $L^1$ functions or measures as data became very fashionable in the modern theory of PDEs. The motivation for studying such problems have been
discussed beautifully by Brezis in the preface of \cite{brezis3}. One of the most important example where the measure data arise naturally in the nonlinear PDE enters from the heat generation. Heat generation from the exothermic reaction driven by the Arrhenius reaction-term with the pre-exponential factor of the Transition state theory \cite{levine} can be presented by the semilinear elliptic PDE with nonlinear term given by
$$k(u)= c_1 u\exp\left(-\frac{c_2}{u}\right)\,\,\mbox{for}\,\,u>0,$$ where 
$c_1, c_2>0$ are the parameters. Here the function $u$ represents the thermodynamic temperature of this
model. For the analytical treatment, define $k(0) := 0$ and consider an odd extension of the function $k$ by
inserting an absolute value $|u|$, i.e.
$$k(u)=c_1 u\exp\left(-\frac{c_2}{|u|}\right).$$ Then the heat generation can be described by the following PDE involving measure
\begin{align}
\begin{split}
-\Delta u  & = \lambda k(u)+\mu\,\,\mbox{in}\,\,\Omega,\\
u & = 0\,\,\mbox{on}\,\,\partial\Omega. 
\end{split}
\end{align}
We remark that for example heating of the substance at one single point by laser can be expressed by
taking $\mu:=\delta_{x_0}$ being the Dirac measure concentrated at point $x_0\in\Omega$ \cite{atkindepaula}. The PDEs involving measures also have an important role in the theory of probability and in the use of probabilistic methods \cite{dynkin} which gives a new strength to the whole subjects in the recent years. 

In the present article, we are concerned with the boundary value problems for the general second order semilinear elliptic equation involving measures of finite total variation. Problems of this type, involving elliptic operators modeled upon the Laplacian or the $p$-Laplacian, have been systematically studied in the literature, starting with the papers \cite{bocaardogallouet, bocaardogallouet1}, where measure on the right-hand side are considered. Contribution to this topic can be found in \cite{alvinomercaldo}, \cite{alvinoferone}, \cite{Mingionemeasure} and the references therein. In all these articles the elliptic operator which has been considered are either the Laplacian or the $p$-Laplacian. In 2004,  V\'{e}ron\cite{ver} studied the elliptic PDE involving measures where a general linear second order elliptic operator with variable coefficients is appeared, which is precisely the following  \begin{align}
\begin{split}
-L u  & = \lambda\,\,\mbox{in}\,\,\Omega,\\
u & = \mu\,\,\mbox{on}\,\,\partial\Omega,
\end{split}
\end{align}
where $\Omega$ is a smooth domain in $\mathbb{R}^N$, $L$ is a general linear elliptic operator of second order, $\lambda$ and $\mu$ are Radon measures, respectively in $\Omega$ and $\partial\Omega$. Motivated by the interest shared by the mathematical community in this topic, we
study here the existence and uniqueness of solutions to the following Dirichlet problem of the form 
\begin{align}
\begin{split}
-L u + g\circ u & = \mu\,\,\mbox{in}\,\,\Omega,\\
u & = \nu\,\,\mbox{on}\,\,\partial\Omega, \label{ineq1}
\end{split}
\end{align}
where, $\Omega$ is a bounded domain in $\mathbb{R}^N$ with $C^2$ boundary $\partial\Omega$, $L$ is a linear second order differential operator in divergence form, given by 
\begin{equation}
Lu=\sum_{i,j=1}^{N}\frac{\partial}{\partial
	x_i}\left(a_{ij}(x)\frac{\partial u}{\partial
	x_j}\right)-\sum_{j=1}^{N}b_j(x)\frac{\partial u}{\partial
	x_j}+\sum_{j=1}^{N}\frac{\partial (c_j(x) u)}{\partial x_j}-du,\label{Ldef}
\end{equation}
where the functions $a_{ij}$, $b_j$, $c_j$ and $d$ are Lipschitz continuous in $\Omega$
and the principle part of $L$ satisfies the uniform ellipticity condition,
\begin{equation}
\sum_{i,j=1}^{N}a_{ij}(x)\xi_i \xi_j\geq \alpha \sum_{i=1}^N
\xi_i^2, \,\,\forall \xi= (\xi_1,
\xi_2,\cdots,\xi_N)\in\mathbb{R}^N\label{ellip}
\end{equation}
for almost all $x\in\Omega$ with $\alpha>0$ and the input data $\mu, \nu$ are supposed to be Radon measures over $\Omega$, $\partial\Omega$ respectively and $g$ is a given nonlinear function defined on $\Omega \times \mathbb{R}$ with
$g\circ u(x) = g(x, u(x))$. We also assume the following conditions
on $g$:
\begin{align}
\begin{split}
 & \text{(a)}\text{ $g(x, \cdot) \in C(\mathbb{R}),\,\,\,\,g(x, 0)=0,$ } \\
& \text{(b)}\text{ $g(x, \cdot)$ is non decreasing},\\
& \text{(c)} \text{ $g(\cdot, t)\in L^1(\Omega,
\rho)\,,\,\,$}\label{ineq2}
\end{split}
\end{align}
where $L^1(\Omega, \rho)$ denotes the weighted Lebesgue space with
the weight $\rho(x) = \mbox{dist}(x,\partial\Omega)$ for $x\in\bar{\Omega}$. The family of
functions satisfying $(\ref{ineq2})$, will be denoted by
$\mathscr{G}_0$. Observe that if $g\in\mathscr{G}_0$, then the
function $g^*$ given by $g^*(x, t)= -g(x, -t)$ is also in
$\mathscr{G}_0$. Some examples of the nonlinear function $g(x, u(x))$ are the following:
$|u|^q$ for $q\geq 1$, $e^{au}-1$ where $a>0$, $e^{-k/\rho}|u|^{q-1}u$ where $k\geq 0$ \& $q>1$, $\rho(x)^\alpha|u|^q\text{sign}(u)$ where $\alpha>-2$ \& $q>1$, $\rho(x)^\alpha|u|^{q-1}u$ for $q>1$ etc.\\
If $L$ is defined by ($\ref{Ldef}$), then its adjoint operator $L^*$ is given by
\begin{eqnarray}
L^{*}\varphi &=& \sum_{i,j=1}^{N}\frac{\partial}{\partial
	x_j}\left(a_{ij}\frac{\partial \varphi}{\partial
	x_i}\right)-\sum_{j=1}^{N}c_j\frac{\partial \varphi}{\partial
	x_j}+\sum_{j=1}^{N}\frac{\partial}{\partial
	x_j}(b_j\varphi)-d\varphi\label{adj}
\end{eqnarray}
We assume an important uniqueness condition, symmetric in the  $b_j$ and $c_j$, is the following
\begin{equation}
\int_{\Omega} \left( dv +\sum_{j=1}^N\frac{1}{2}(b_j+c_j)\frac{\partial v}{\partial x_j}\right)dx \geq 0,~~\forall~v\in C_c^1(\Omega), v\geq 0.\label{unieque} 
\end{equation}
Under the assumption that the coefficients $a_{ij}$, $b_j$, $c_j$ and $d$ are bounded and measurable in $\Omega$, the uniform ellipticity condition ($\ref{ellip}$), and the uniqueness condition ($\ref{unieque}$), the two operators $L$ and $L^*$ define an isomorphism between $W_0^{1,2}(\Omega)$ and $W^{-1,2}(\Omega)$. Through out this paper, we assume for the operator $L$, the functions $a_{ij}$, $b_j$, $c_j$ and $d$ are Lipschitz continuous functions in $\Omega$, the uniform ellipticity condition $(\ref{ellip})$ and the uniqueness condition ($\ref{unieque}$) holds.

\par Not many evidences are
found in the literature which addresses the problem of existence of
a solution to the equation $(\ref{ineq1})$ with measure data and
hence the reader is suggested to refer to Brezis \cite{brez} which
is one of the earliest attempts made in studying the non-linear
equations with measure data. In fact, he considered the equation of
the type
\begin{align}
\begin{split}
-\Delta u + |u|^{p-1}u & = f(x)\,\,\mbox{in}\,\,\Omega,\\
u & = 0\,\,\mbox{on}\,\,\partial\Omega,
\end{split}
\end{align}
where $\Omega\subset \mathbb{R}^N$ and $0\in \Omega$
with $f$ a given function in $L^1(\Omega)$ or a measure. 
A detailed study of non-linear elliptic partial differential
equations of the above type with measures can be found in Brezis et
al \cite{brezis2}.
Here they have introduced the notion of {\it `reduced limit'}. Readers will perhaps often need to refer to Marcus and V\'{e}ron
\cite{veron} for its richness in addressing problems concerning the
existence of a solution to the nonlinear, second order elliptic
equations involving measures. Some other pioneering contribution to
nonlinear problems with $L^1$ data or measure data which is worth
mentioning are due to Brezis $\&$ Strauss \cite{brezis1}, Marcus
$\&$ Ponce \cite{marcus}, Bhakta and Marcus \cite{bhakta} and the references
therein. The present work in this article
draws its motivation from Marcus
$\&$ Ponce \cite{marcus} and Bhakta and Marcus \cite{bhakta} in which they have
considered the problem ($\ref{ineq1}$) for $L=\Delta$, with data $(\mu, 0) $ and $(0, \nu)$ respectively. In this article we address the problem
for a general linear, second order, elliptic differential operator $L$ and also with input data $(\mu, \nu)$. For an general elliptic operator $L$, things become more complicated if the associated adjoint
is not self adjoint.

\par We now begin our approach to the problem $(\ref{ineq1})$ by defining some of the notations and the definitions which will be quintessential
to our study. We denote $\mathfrak{M}(\Omega)$ to be the space of finite Borel
measures endowed with the norm
$||\mu||_{\mathfrak{M}(\Omega)}=\int_\Omega d|\mu|.$ The measure space $\mathfrak{M}(\Omega)$ is the dual of
$$C_0(\bar{\Omega}) =\{ f \in C(\bar{\Omega}) : f =0
\,\,\mbox{on}\,\, \partial \Omega\}.$$ Similarly, we denote $\mathfrak{M}(\partial\Omega)$ to be the space of bounded Borel measures on $\partial\Omega$ with the usual total variation norm. 
\begin{definition}
	Let $\{\mu_n\}$ be a bounded sequence of measures in
	$\mathfrak{M}(\Omega)$. We say that $\{\mu_n\}$ converges weakly in
	$\Omega$ to a measure $\tau \in \mathfrak{M}(\Omega)$ if $\{\mu_n\}$
	converges weakly to $\tau$ in $\mathfrak{M}(\Omega)$, i.e.
	$$\displaystyle{\int_{\Omega}\varphi d\mu_n} \rightarrow
	\int_{\Omega}\varphi d\tau\,\,; \,\,\,\forall\,\,\varphi\in
	C_0(\bar{\Omega}).$$ We denote this convergence by $\mu_n
	\xrightharpoonup[\Omega]{} \tau$. 
\end{definition}
\noindent We denote by $\mathfrak{M}(\Omega,\rho)$, the
space of signed Radon measures $\mu$ in $\Omega$ such that $\rho\mu
\in \mathfrak{M}(\Omega)$. The norm of a measure $\mu \in
\mathfrak{M}(\Omega, \rho)$ is given by $||\mu||_{\Omega, \rho} =
\int_\Omega \rho d|\mu|.$ This space is the dual of
$$C_0(\bar{\Omega},\rho) = \left\{ h\in C_0(\bar{\Omega}): \frac{h}{\rho}\in
C_0(\bar{\Omega})\right\},$$ where $\frac{h}{\rho} \in
C_0(\bar{\Omega})$ means $\frac{h}{\rho}$ has a continuous extension
to $\bar{\Omega}$, which is zero on $\partial\Omega$.

\begin{definition}
	A sequence $\{\mu_n\}$ in $\mathfrak{M}(\Omega, \rho)$ converges {\it
		`weakly'} to $\mu \in \mathfrak{M}(\Omega, \rho)$ if
	$$\int_\Omega f d\mu_n \rightarrow \int_\Omega f d\mu\, ;\,\,\,
	\forall\,\, f\in C_0(\bar{\Omega},\rho).$$ 
\end{definition}
\noindent The weak convergence in
this sense is equivalent to the weak convergence $\rho\mu_n
\rightharpoonup \rho\mu$ in $\mathfrak{M}(\Omega)$. For this and other properties of weak convergence of
measures we refer to the textbook \cite{veron}. In this article, we
consider the problem $(\ref{ineq1}$) with $\mu\in
\mathfrak{M}(\Omega, \rho)$ and $\nu \in \mathfrak{M}(\partial \Omega)$.\\ The following two definitions of convergence are due to Bhakta and Marcus \cite{bhakta} which are relevant to our study.
\begin{definition}{\label{df1}}
Let $\{\mu_n\}$ be a bounded sequence of measures in
$\mathfrak{M}(\Omega,\rho)$ and $\rho\mu_n$ is extended to a Borel
measure $(\mu_n)_\rho \in \mathfrak{M}(\bar{\Omega})$ defined as zero on
$\partial\Omega$. We say that $\{\rho\mu_n\}$ converge weakly in
$\bar{\Omega}$ to a measure $\tau \in \mathfrak{M}(\bar{\Omega})$ if
$\{(\mu_n)_\rho\}$ converges weakly to $\tau$ in
$\mathfrak{M}(\bar{\Omega})$, i.e.
$$\displaystyle{\int_{\Omega}\varphi\rho d\mu_n} \rightarrow
\int_{\bar{\Omega}}\varphi d\tau\,\,; \,\,\,\forall\,\,\varphi\in
C(\bar{\Omega}).$$ We denote this convergence by $\rho\mu_n
\xrightharpoonup[\bar{\Omega}]{} \tau$.
\end{definition}
\begin{definition}
Let $\{\mu_n\}$ be a sequence in
$\mathfrak{M}_{\text{loc}}(\Omega)$, the space of measures $\mu$ on $$\mathfrak{B}_c=\{E \Subset \Omega: E~\mbox{Borel}\} ,$$ 
such that $\mu\chi_K$ is a finite measure for every compact subset $K \subset \Omega$. We say that $\{\mu_n\}$
converges weakly to $\mu \in \mathfrak{M}_{\text{loc}}(\Omega)$ if
it convergence in the sense of distribution, i.e.
$$\displaystyle{\int_{\Omega}\varphi d\mu_n \rightarrow
\int_{\Omega}\varphi d\mu}\,\,;\,\,\, \forall\,\,\varphi\in C_c(\Omega).$$
We denote this convergence by  $\mu_n\xrightharpoonup[d]{}\mu$.
\end{definition}
\begin{remark}{\label{rk1}}
It can be seen that if $\rho\mu_n\xrightharpoonup[\bar{\Omega}]{}\tau$ then $\mu_n\xrightharpoonup[d]{}\mu_{int}:=\frac{\tau}{\rho}\chi_{\Omega}$. Thus $\tau$ as in the definition $\ref{df1}$,  $\tau=\tau\chi_{\partial\Omega}+\rho\mu_{int}$.
\end{remark}
Let us now come back to our considered semilinear elliptic boundary problem involving measures. Here we will study the existence and uniqueness of {\it `very weak solution'} for the problem $(\ref{ineq1})$. The main reason for attempting the very weak solution instead of weak solution for the problem $(\ref{ineq1})$ comes from the following fact. There are many simple linear elliptic PDEs  of second order with $L^1$ data or measure data on smooth domain $\Omega\subset \mathbb{R}^N$ for which very weak  solutions exists but not weak solutions. For example consider Brezis' problem \cite{brecab}, i.e. Poission equations $-\Delta u =f$ in $\Omega$, under the homogeneous Dirichlet boundary conditions $u=0$ on $\partial\Omega$ for a right hand side $f\in L^1(\Omega, \rho)$. In this Poission problem, for every $f\in L^1(\Omega, \rho)$, existence and uniqueness of a very weak solution $u\in L^1(\Omega)$ satisfying 
$$-\int_{\Omega}u\Delta vdx=\int_{\Omega}fvdx$$ for all $v\in C{^2}(\bar{\Omega})$ with $v=0$ on $\partial\Omega$ is known, but there exists smooth domain $\Omega$ and right hand side function $f\in L^1(\Omega, \rho)$, $f\notin L^1(\Omega)$ such that very weak solution $u$ does not have a weak derivative $\nabla u\in L^1(\Omega)$, i.e. $u\notin W^{1,1}(\Omega)$ and hence is not a weak solution. Thus such a weakening the notion of strong solution is necessary for our considered problem.  
\begin{definition}{\label{def}}
We will define $u\in L^1(\Omega)$ to be a {\it `very weak solution'}
of the problem $(\ref{ineq1})$, if $g\circ u \in L^1(\Omega, \rho)$
and $u$ satisfies the following
\begin{align}
\begin{split}
\int_{\Omega}(- u L^{*} \varphi + (g\circ u)\varphi )dx &= \int_\Omega
\varphi d\mu - \int_{\partial\Omega} \frac{\partial \varphi}{\partial
{\bf{n}}_{L^{*}}} d\nu\,,\,\,\,\forall\,\varphi\in
C^{2,L}_c(\bar{\Omega})\label{ineq3}
\end{split}
\end{align}
where $$C^{2,L}_c(\bar{\Omega}) := \{\varphi\in C^2(\bar{\Omega}) :
\varphi =0\,\,\mbox{on}\,\,
\partial\Omega\,\,\mbox{and}\,\,L^{*}\varphi\in L^{\infty}(\Omega)\}.$$
and  $\frac{\partial \varphi}{\partial
{\bf{n}}_{L^{*}}}=\sum_{i,j=1}^{N}a_{ij}\frac{\partial\varphi}{\partial
x_i}{\bf{n}}_j$, ${\bf{n}}_j$'s are being the component of the outward normal unit vector $\bf{n}$ to
$\partial\Omega$.
\end{definition}
Notice that the co-normal derivative on the boundary following $L^*$, $\frac{\partial \varphi}{\partial {\bf{n}}_{L^{*}}}$ can be written as 
\begin{equation}
\frac{\partial \varphi}{\partial n_{L^{*}}} = \nabla \varphi A \cdot {\bf{n}}= \nabla \varphi \cdot {\bf{n}} A^T\label{conormal}
\end{equation}
where the matrix $A$ is given by $A= (a_{ij})_{N\times N}$ which corresponds to the principle part of the elliptic differential operator $L$. By the uniform ellipticity condition ($\ref{ellip}$), we have ${\bf{n}}\cdot {\bf{n}}A^T>0$. \\
 The most important thing here is that the problem may or may not posses a solution in the very weak sense for every measure. Such an example can be found in Brezis \cite{brez}.  Hence the concept of a {\it{ `good measure'}} was introduced in the literature, which is
defined as follows.
\begin{definition}
We denote by $\mathfrak{M}^g(\bar{\Omega})$ the set of pairs of measures $(\mu, \nu) \in
\mathfrak{M}(\Omega, \rho) \times \mathfrak{M}(\partial\Omega)$ for which the boundary value problem ($\ref{ineq1}$) possesses a solution in very weak sense. If $(\mu, \nu) \in \mathfrak{M}^g(\bar{\Omega})$, we call $(\mu, \nu)$ is a pair of good measures.

\end{definition}
\subsection{Reduced limit}  
Let $\{\mu_n\}$ and $\{\nu_n\}$ be sequences of measures in
$\mathfrak{M}(\Omega, \rho)$ and $\mathfrak{M}(\partial\Omega)$
respectively. Assume that there exists a solution $u_n$ of the
problem ($\ref{ineq1}$) with data $(\mu_n, \nu_n)$, i.e. $u_n$
satisfies the equation ($\ref{ineq3}$) with $\mu= \mu_n$ and $\nu=\nu_n$. Further assume that the sequences of
measures converge in a weak sense to $\mu$ and $\nu$ respectively
while the sequence of very weak solutions $\{u_n\}$ converges to $u$ in $L^1(\Omega)$. In general $u$ is not a very weak solution to the boundary value problem ($\ref{ineq1}$) with data $(\mu, \nu)$. However if there exists measures $(\mu^{\#}, \nu^{\#})$ such that $u$ is a very weak solution of the boundary value problem
($\ref{ineq1}$) with this data, then the pair $(\mu^{\#}, \nu^{\#})$
is called the `{\it reduced limit}' of the sequence $\{\mu_n,
\nu_n\}$. The notion of {\it `reduced limit'} was introduced by Brezis et al. \cite{brezis2} for $L=-\Delta$.  The {\it `reduced measure'} as defined by Brezis et al
\cite{brezis2} is the largest good measure $\leq \mu$ for a Laplacian. In short, the job of a reduced limit of a sequence of measures is to characterize
the class of measures to which the problem has a solution. Here in this work, our main aim is to determined the reduced limit corresponding to our problem ($\ref{ineq1}$).
\par We will use here a well known variational technique
to show existence of solution in $W_0^{1,2}(\Omega)= \{v\in
L^2(\Omega): \nabla v\in L^2(\Omega), v|_{\partial\Omega}=0\}$ with
the Sobolev Norm $||v||_{1,2}= \left(\int_\Omega |\nabla v|^2
dx\right)^{\frac{1}{2}}$. Now let us define $\displaystyle{<u, v> = \int_{\Omega}\sum_{i,j=1}^n
a_{ij}u_{x_i}v_{x_j}}$ over $W_0^{1,2}(\Omega)$. Then the uniform
ellipticity condition $(\ref{ellip})$, implies that $< , >$ is an
inner product on $W_0^{1,2}(\Omega)$. It can be seen that the norm
$||u||= <u, u>^{1/2}$ is equivalent to the Sobolev norm of
$W_0^{1,2}(\Omega)$. This norm equivalence will be effectively used
in the manuscript. The manuscript has been organized into three sections. In Section 2, we begin by studying the
semilinear boundary value problem with $L^1$ data and show certain
basic lemmas and existence theorems. In Section 3, we continue the study by
considering the semilinear problem with measure data and determines the reduced limit corresponding to the problem.
\section{Semilinear problem with $L^1$ data}
In this section we consider the nonlinear boundary value problem 
with $L^1$ data which is as
\noindent follows
\begin{align}
\begin{split}
-L u + g\circ u & =  f \,\,\mbox{in}\,\, \Omega, \\
u & =  \eta \,\,\mbox{on}\,\, \partial\Omega. \label{eq1}
\end{split}
\end{align}
Here $g\in \mathscr{G}_0$, $f\in L^1(\Omega,\rho)$ and $\eta \in
L^1(\partial \Omega)$.\\
Now we have the following result due to Theorem 2.4, \cite{ver}.
\begin{lemma}{\label{lem}}
	Let $f\in L^1(\Omega, \rho)$ and $\eta \in
	L^1(\partial \Omega)$. Then there exists a unique very weak solution  $u\in L^1(\Omega)$ to the problem
	\begin{align}
	\begin{split}
	-L u & =  f \,\,\mbox{in}\,\, \Omega, \\
	u & =  \eta \,\,\mbox{on}\,\, \partial\Omega. \label{e1}
	\end{split}
	\end{align}
	Furthermore, for any $\varphi\in C^{2,L}_c(\bar{\Omega})$, $\varphi\geq 0$, there holds
	$$-\int_\Omega u_+ L^* \varphi dx \leq \int_\Omega f (sign_+ u)\varphi dx - \int_{\partial\Omega} \frac{\partial
		\varphi}{\partial {\bf{n}}_{L^{*}}} d\nu_+$$ and
	$$-\int_\Omega |u| L^* \varphi dx \leq \int_\Omega f
	(sign\, u)\varphi dx - \int_{\partial\Omega} \frac{\partial
		\varphi}{\partial {\bf{n}}_{L^{*}}} d|\nu|.$$
\end{lemma}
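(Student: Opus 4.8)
The plan is to follow the duality--approximation scheme of V\'eron \cite{ver}. First I would treat smooth data: for $f\in C_c^\infty(\Omega)$ and $\eta\in C^2(\partial\Omega)$, the uniform ellipticity (\ref{ellip}), the Lipschitz regularity of the coefficients and the $C^2$ boundary give, by classical elliptic regularity (Schauder and $W^{2,p}$ estimates), a unique $u\in W^{2,p}(\Omega)$, $p<\infty$, solving $-Lu=f$ in $\Omega$, $u=\eta$ on $\partial\Omega$. To bound $u$ in $L^1$, given $\psi\in C_c^\infty(\Omega)$ with $\|\psi\|_{L^\infty}\le 1$ I would solve the adjoint Dirichlet problem $-L^*\varphi=\psi$ in $\Omega$, $\varphi=0$ on $\partial\Omega$; the $W_0^{1,2}(\Omega)\leftrightarrow W^{-1,2}(\Omega)$ isomorphism guaranteed by (\ref{ellip}) and the uniqueness condition (\ref{unieque}), together with elliptic regularity, makes $\varphi$ (after a standard regularization of $\partial\Omega$ if needed) an admissible test function in (\ref{ineq3}) with $L^*\varphi=-\psi\in L^\infty(\Omega)$, while a barrier argument at $\partial\Omega$ gives $\|\varphi/\rho\|_{L^\infty(\Omega)}+\|\partial\varphi/\partial{\bf{n}}_{L^{*}}\|_{L^\infty(\partial\Omega)}\le C$ with $C$ independent of $\psi$. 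Using that a weak solution is a very weak solution (as recorded after (\ref{conormal})) and testing against $\varphi$,
\begin{equation*}
\int_\Omega u\,\psi\,dx=\int_\Omega f\,\varphi\,dx-\int_{\partial\Omega}\frac{\partial\varphi}{\partial{\bf{n}}_{L^{*}}}\,\eta\,dS ,
\end{equation*}
so $\big|\int_\Omega u\,\psi\,dx\big|\le C\big(\|f\|_{L^1(\Omega,\rho)}+\|\eta\|_{L^1(\partial\Omega)}\big)$, and taking the supremum over $\psi$ yields $\|u\|_{L^1(\Omega)}\le C\big(\|f\|_{L^1(\Omega,\rho)}+\|\eta\|_{L^1(\partial\Omega)}\big)$ (here $dS$ is the surface measure on $\partial\Omega$).

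Next I would pass to the limit. Choosing $f_k\in C_c^\infty(\Omega)$ with $f_k\to f$ in $L^1(\Omega,\rho)$ and $\eta_k\in C^2(\partial\Omega)$ with $\eta_k\to\eta$ in $L^1(\partial\Omega)$, the estimate above applied to the difference of two approximate solutions $u_k$, $u_m$ shows $\{u_k\}$ is Cauchy in $L^1(\Omega)$; since $|\varphi|\le C\rho$ one may pass to the limit in (\ref{ineq3}) with $g\equiv 0$, so the limit $u$ is the sought very weak solution. Uniqueness follows at once: if $w\in L^1(\Omega)$ solves the problem with $f=0$, $\eta=0$, then $\int_\Omega w\,L^*\varphi\,dx=0$ for every $\varphi\in C^{2,L}_c(\bar{\Omega})$, and since for $\psi\in C_c^\infty(\Omega)$ the adjoint problem supplies such a $\varphi$ with $L^*\varphi=\psi$, we conclude $w\equiv 0$.

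For the Kato-type inequalities I would again work with the smooth approximants $u_k$. Let $S_\delta\in C^\infty(\mathbb{R})$ satisfy $0\le S_\delta\le 1$, $S_\delta\equiv 0$ on $(-\infty,0]$, $S_\delta'\ge 0$ and $S_\delta\to\mathrm{sign}_+$ pointwise on $\mathbb{R}$, and let $j_\delta$ be its primitive with $j_\delta(0)=0$, so $j_\delta\to(\cdot)_+$. Multiplying $-Lu_k=f_k$ by $S_\delta(u_k)\varphi$ with $\varphi\in C^{2,L}_c(\bar{\Omega})$, $\varphi\ge 0$, and integrating by parts twice, the principal part produces the nonnegative term $\int_\Omega\sum_{i,j}a_{ij}\,\partial_{x_i}u_k\,\partial_{x_j}u_k\,S_\delta'(u_k)\,\varphi\,dx\ge 0$ (by (\ref{ellip}), $\varphi\ge 0$, $S_\delta'\ge 0$), which I would discard; all other contributions, in the limit $\delta\to 0$, reassemble into $-\int_\Omega (u_k)_+\,L^*\varphi\,dx+\int_{\partial\Omega}\frac{\partial\varphi}{\partial{\bf{n}}_{L^{*}}}\,(\eta_k)_+\,dS$ (boundary integrals in which $\varphi$ itself appears vanish since $\varphi=0$ on $\partial\Omega$), while the right-hand side tends to $\int_\Omega f_k\,(\mathrm{sign}_+u_k)\,\varphi\,dx$. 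Hence $-\int_\Omega (u_k)_+L^*\varphi\,dx\le\int_\Omega f_k(\mathrm{sign}_+u_k)\varphi\,dx-\int_{\partial\Omega}\frac{\partial\varphi}{\partial{\bf{n}}_{L^{*}}}(\eta_k)_+\,dS$; letting $k\to\infty$, using $u_k\to u$ and $(\eta_k)_+\to\eta_+$ in $L^1$ and the boundedness of $L^*\varphi$ and of $\partial\varphi/\partial{\bf{n}}_{L^{*}}$, gives
\begin{equation*}
-\int_\Omega u_+\,L^*\varphi\,dx\le\int_\Omega f\,(\mathrm{sign}_+u)\,\varphi\,dx-\int_{\partial\Omega}\frac{\partial\varphi}{\partial{\bf{n}}_{L^{*}}}\,\eta_+\,dS ,
\end{equation*}
which is the asserted inequality (with $\nu_+=\eta_+\,dS$). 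Applying this to $-u$, which solves $-Lv=-f$, $v=-\eta$, and adding, gives the one for $|u|$ via $|u|=u_++(-u)_+$ and $\mathrm{sign}\,u=\mathrm{sign}_+u-\mathrm{sign}_+(-u)$.

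The hard part will be the passage to the limit in $\int_\Omega f_k\,(\mathrm{sign}_+u_k)\,\varphi\,dx$: along a subsequence $u_k\to u$ a.e., hence $\mathrm{sign}_+u_k\to\mathrm{sign}_+u$ a.e.\ on $\{u\neq 0\}$, but the signs may oscillate on $\{u=0\}$. I would resolve this by noting that for the $W^{2,p}$ solutions $u_k$ one has $\nabla u_k=0$ and $D^2u_k=0$ a.e.\ on $\{u_k=0\}$, hence $f_k=-Lu_k=0$ a.e.\ there, so $f_k\,(\mathrm{sign}_+u_k)\,\varphi=f_k\,\chi_{\{u_k>0\}}\,\varphi$; combining the a.e.\ convergence of $\chi_{\{u_k>0\}}$ off $\{u=0\}$, the equi-integrability of $\{\rho f_k\}$ (from $\rho f_k\to\rho f$ in $L^1(\Omega)$), and the interior regularity $u\in W^{1,q}_{\mathrm{loc}}(\Omega)$ for $q<N/(N-1)$ of the limit --- which forces $\nabla u=0$, hence $Lu=0$, a.e.\ on $\{u=0\}$ --- one identifies the limit as $\int_\Omega f\,(\mathrm{sign}_+u)\,\varphi\,dx$. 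The only other delicate point is the solvability of the adjoint problem within the class of admissible test functions, together with the $L^\infty$ bound on $\partial\varphi/\partial{\bf{n}}_{L^{*}}$, which rely on (\ref{ellip}), (\ref{unieque}) and the strict positivity ${\bf{n}}\cdot{\bf{n}}A^T>0$ noted after (\ref{conormal}).
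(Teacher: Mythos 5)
The paper does not actually prove this lemma: it is imported verbatim from Theorem 2.4 of V\'eron \cite{ver} (the text introduces it with ``we have the following result due to Theorem 2.4, \cite{ver}''), so there is no in-paper argument to compare against. What you have written is, in outline, the standard proof that underlies the cited result --- smooth approximation of the data, the duality estimate obtained by testing against solutions of the adjoint problem $-L^*\varphi=\psi$ together with the bounds $|\varphi|\le C\rho$ and $|\partial\varphi/\partial{\bf{n}}_{L^{*}}|\le C$, and a Kato-type inequality proved by multiplying by $S_\delta(u_k)\varphi$ and discarding the nonnegative quadratic term $\int_\Omega\sum_{i,j}a_{ij}\partial_{x_i}u_k\,\partial_{x_j}u_k\,S_\delta'(u_k)\varphi\,dx$. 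The existence, uniqueness and $L^1$-estimate parts are sound, and correctly use the coercivity coming from (\ref{ellip}) and (\ref{unieque}) and the remark after (\ref{conormal}).

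The one step you flag as hard is indeed where your argument is thinnest. Your reduction $f_k\,\mathrm{sign}_+(u_k)=f_k\,\chi_{\{u_k>0\}}$ via ``$Lu_k=0$ a.e.\ on $\{u_k=0\}$'' is fine for the $W^{2,p}$ approximants, but the corresponding assertion for the limit, ``$u\in W^{1,q}_{\mathrm{loc}}$ forces $\nabla u=0$, hence $Lu=0$, a.e.\ on $\{u=0\}$,'' is not meaningful as stated: $Lu$ contains second derivatives and the limit $u$ is only in $W^{1,q}_{\mathrm{loc}}$, $q<N/(N-1)$, so $Lu$ is a priori only a distribution and cannot be evaluated pointwise on a level set. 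The fact you need --- that $f=-Lu$ vanishes a.e.\ on $\{u=0\}$ when $f\in L^1_{\mathrm{loc}}$ --- is true but is itself a nontrivial Kato/Brezis--Ponce-type result, not a consequence of $\nabla u=0$ on the level set alone. A cleaner route, and the one effectively taken in \cite{ver} and in Marcus--V\'eron \cite{veron}, is to first establish the interior distributional inequality $-L(u_+)\le f\,\mathrm{sign}_+(u)$ in $\mathcal{D}'(\Omega)$ and only then recover the boundary term through the $M$-boundary trace; alternatively one can settle for the weaker (but sufficient for all uses in this paper) bound with $f\,\mathrm{sign}_+(u)$ replaced by $f_+$ on $\{u=0\}$. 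Apart from this identification-of-the-limit issue, the proposal is a correct reconstruction of the standard proof.
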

\begin{lemma}{\label{lem3}}
	If $u_i\in L^1(\Omega)$ are very weak solutions of $(\ref{eq1})$
	corresponding to $f = f_i$, $\eta = \eta_i$ for $i = 1,2$; then we
	have the following estimate
	\begin{align}
	\begin{split}
	\|u_1-u_2\|_{L^1(\Omega)} + & \|g\circ u_1 - g\circ
	u_2\|_{L^1(\Omega,\rho)} \leq \\
	& C( \|f_1-f_2\|_{L^1(\Omega,\rho)} + \|\eta_1
	-\eta_2\|_{L^1(\partial \Omega)})\label{eq2}
	\end{split}
	\end{align}
	for some $C>0$.
\end{lemma}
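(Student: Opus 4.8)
The plan is to reduce the nonlinear estimate to the linear estimate of Lemma \ref{lem} by a standard Kato-type argument. Set $w = u_1 - u_2$, $F = f_1 - f_2$, $H = \eta_1 - \eta_2$, and observe that $w$ is the very weak solution of the linear problem $-Lw = \tilde F$ in $\Omega$, $w = H$ on $\partial\Omega$, where $\tilde F := F - (g\circ u_1 - g\circ u_2) \in L^1(\Omega,\rho)$; here one must first check that $g\circ u_i \in L^1(\Omega,\rho)$ (part of the definition of very weak solution) so that $\tilde F$ is an admissible datum, and that subtracting the two identities in \eqref{ineq3} for $u_1$ and $u_2$ gives precisely the very weak formulation for $w$ with datum $(\tilde F, H)$.

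Next I would apply the second inequality of Lemma \ref{lem} (Kato's inequality for $L^*$) to $w$ with datum $(\tilde F, H)$: for every $\varphi \in C^{2,L}_c(\bar\Omega)$, $\varphi \geq 0$,
\begin{equation*}
-\int_\Omega |w|\, L^*\varphi\, dx \leq \int_\Omega \tilde F\,(\mathrm{sign}\, w)\,\varphi\, dx - \int_{\partial\Omega} \frac{\partial\varphi}{\partial {\bf n}_{L^*}}\, d|H|.
\end{equation*}
The monotonicity of $g(x,\cdot)$ in \eqref{ineq2}(b) gives the sign condition $(g\circ u_1 - g\circ u_2)\,\mathrm{sign}(u_1 - u_2) = |g\circ u_1 - g\circ u_2|$ pointwise, so $\tilde F\,\mathrm{sign}\, w \leq |F| - |g\circ u_1 - g\circ u_2|$, whence
\begin{equation*}
-\int_\Omega |w|\, L^*\varphi\, dx + \int_\Omega |g\circ u_1 - g\circ u_2|\,\varphi\, dx \leq \int_\Omega |F|\,\varphi\, dx - \int_{\partial\Omega} \frac{\partial\varphi}{\partial {\bf n}_{L^*}}\, d|H|.
\end{equation*}
Now I would choose $\varphi = \varphi_1$ to be the solution of $-L^*\varphi_1 = 1$ in $\Omega$, $\varphi_1 = 0$ on $\partial\Omega$. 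By the maximum principle (which holds under \eqref{ellip} and the uniqueness condition \eqref{unieque}), $\varphi_1 \geq 0$, and by elliptic regularity together with the Hopf lemma one has $c\,\rho(x) \leq \varphi_1(x) \leq C\,\rho(x)$ and $-C \leq \frac{\partial\varphi_1}{\partial{\bf n}_{L^*}} \leq -c < 0$ on $\partial\Omega$ for constants $c, C > 0$ depending only on $\Omega$ and the coefficients of $L$; here the positivity of ${\bf n}\cdot {\bf n}A^T$ noted after \eqref{conormal} is what makes the co-normal derivative nondegenerate. Substituting these bounds yields $\|w\|_{L^1(\Omega)} + \|g\circ u_1 - g\circ u_2\|_{L^1(\Omega,\rho)} \leq C\big(\|F\|_{L^1(\Omega,\rho)} + \|H\|_{L^1(\partial\Omega)}\big)$, which is \eqref{eq2}. (To estimate $\|w\|_{L^1(\Omega)}$ rather than $\|\rho w\|_{L^1(\Omega)}$ one uses $-L^*\varphi_1 = 1$ directly, so $\int_\Omega |w|\,dx = -\int_\Omega |w| L^*\varphi_1\,dx$.)

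The main obstacle I anticipate is justifying the use of Kato's inequality and the test function $\varphi_1$ at the level of very weak solutions: $\varphi_1$ must be shown to lie in $C^{2,L}_c(\bar\Omega)$ (it does, since $L^*\varphi_1 = -1 \in L^\infty$), and one must verify that Lemma \ref{lem}'s inequalities, stated there for the datum of a linear problem, legitimately apply with $\tilde F$ in place of $f$ — which is exactly the content of recognizing $w$ as the very weak solution of $-Lw = \tilde F$, $w = H$. A secondary technical point is the two-sided comparison $\varphi_1 \asymp \rho$ and the boundary estimate on $\partial\varphi_1/\partial{\bf n}_{L^*}$; for a general non-self-adjoint $L$ these follow from the standard Schauder/Hopf theory for $L^*$ under the Lipschitz regularity of the coefficients assumed throughout, but they should be cited carefully (e.g. from \cite{ver} or \cite{veron}). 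Everything else is routine.
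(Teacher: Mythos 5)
Your proposal is correct and follows essentially the same route as the paper: subtract the two very weak formulations to view $u_1-u_2$ as a very weak solution of the linear problem with datum $f_1-f_2-(g\circ u_1-g\circ u_2)$, apply the Kato-type inequality of Lemma \ref{lem}, exploit the monotonicity of $g$ to produce $|g\circ u_1-g\circ u_2|$, and test with the solution of $-L^*\varphi=1$, using $\varphi\asymp\rho$ and the Hopf lemma to control the boundary term. The technical points you flag (membership of this test function in $C^{2,L}_c(\bar\Omega)$ and the two-sided comparison with $\rho$) are exactly the ones the paper addresses via the Green function comparison and Hopf's lemma from \cite{ver}.
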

\begin{proof}
	Since $u_1, u_2$ are very weak solutions of $(\ref{eq1})$, then we
	have
	\begin{eqnarray*}
		-\int_\Omega u_i L^{*} \varphi dx + \int_\Omega (g\circ u_i)\varphi dx =
		\int_\Omega f_i \varphi dx - \int_{\partial\Omega} \frac{\partial
			\varphi}{\partial {\bf{n}}_{L^{*}}} d\eta_i
	\end{eqnarray*}
	for all $\varphi \in C_c^{2,L}(\bar\Omega)$, $i=1, 2$. Consequently,
	\begin{eqnarray*}
		-\int_\Omega (u_1-u_2) L^{*} \varphi dx + \int_\Omega (g\circ u_1-
		g\circ u_2)\varphi dx = \int_\Omega (f_1-f_2) \varphi dx -
		\int_{\partial\Omega} \frac{\partial \varphi}{\partial {\bf{n}}_{L^{*}}}
		d(\eta_1-\eta_2)
	\end{eqnarray*}
	for all $\varphi \in C_c^{2,L}(\bar\Omega)$. This implies that
	$u_1-u_2$ is a very weak solution of
	\begin{align}
	\begin{split}
	-L u & = f_1 -f_2 - g\circ u_1 + g\circ u_2 \,\,\,\,\,\mbox{in}\,\, \Omega, \\
	u & =  \eta_1 -\eta_2 \,\,\,\,\,\mbox{on}\,\, \partial\Omega.
	\label{eq3}
	\end{split}
	\end{align}
	Therefore, by Lemma $\ref{lem}$, for any $\varphi\in
	C_c^{2,L}(\bar{\Omega})$, $\varphi\geq 0$
	\begin{align}
	-\int_\Omega |u_1-u_2| L^{*} \varphi dx \leq \int_\Omega
	(f_1-f_2-g\circ u_1 & + g\circ
	u_2)sign(u_1-u_2)\varphi dx \nonumber\\
	& - \int_{\partial\Omega} \frac{\partial \varphi}{\partial {\bf{n}}_{L^{*}}}
	d|\eta_1-\eta_2|\label{eq4}
	\end{align}
	Let $\varphi_0$ be the test function satisfying
	\begin{align}
	\begin{split}
	-L^{*}\varphi & = 1 \,\,\,\, \mbox{in}\,\,\Omega,\\
	\varphi & = 0 \,\,\,\, \mbox{on}\,\,\partial\Omega.\label{eq5}
	\end{split}
	\end{align}
	Existence of solution of the PDE $(\ref{eq5})$ is guaranteed by the
	Lemma 2.1 in \cite{ver}. Since the coefficients of $L$ are Lipschtiz continuous,
	from \cite{ver} we have $\varphi_0\in C_c^{2}(\bar\Omega)$ and $L^*\varphi_0\in
	L^\infty(\Omega)$, hence $\varphi_0\in C_c^{2,L}(\bar\Omega)$. It can be seen that $\varphi_0>0$ in
	$\Omega$. This is due to a result in Theorem 2.11, \cite{ver}
	that there exists $\lambda>0$ such that $0<\lambda G_{-\Delta}^{\Omega}<
	G_{L^{*}}^{\Omega}<\lambda^{-1}G_{-\Delta}^{\Omega}$, where
	$G_{-\Delta}^{\Omega}$, $G_{L^{*}}^{\Omega}$ in
	$\Omega\times\Omega\setminus D_{\Omega}$ are the Green's function of
	$-\Delta$, $L^{*}$ respectively and
	$D_{\Omega}=\{(x,x):x\in\Omega\}$. Since from Zhao \cite{zhao},
	$G_{-\Delta}^{\Omega}(x,y)>0$, hence we have
	$\int_{\Omega}G_{-\Delta}^{\Omega}(x,y)dy>0$. It is easy to see 
	that the integral $\int_{\Omega}G_{-\Delta}^{\Omega}(x,y)dy$ is
	finite. Further, there exists $c>0$ such that $c^{-1}<\frac{\varphi_0}{\rho}<c$ in $\Omega$ since $\frac{\varphi_0}{\rho}$ can be continuously extended to $\partial\Omega$ as $\displaystyle{\frac{\varphi_0}{\rho}|_{\partial\Omega}=-\frac{\partial\varphi_0}{\partial{\bf{n}}_{L^*}}\cdot\frac{1}{{\bf{n}}\cdot {\bf{n}}A^T}}$ (refer Corollary $\ref{lem1}$) where $-\frac{\partial\varphi_0}{\partial{\bf{n}}_{L^*}}$ is bounded by the Hopf's lemma (refer Theorem 2.13, \cite{ver}) and $\displaystyle{\frac{1}{{\bf{n}}\cdot {\bf{n}}A^T}}$ is bounded by the uniform ellipticity condition $(\ref{ellip})$. Therefore, taking $\varphi=\varphi_0$ as a test function in $(\ref{eq4})$, we obtain
	\begin{align*}
	\begin{split}
	\int_\Omega |u_1-u_2|dx \leq \int_\Omega
	(f_1-f_2)\varphi_0\text{sign}(u_1-u_2) dx  & + \int_\Omega (-g\circ u_1
	+
	g\circ u_2)sign(u_1-u_2)\varphi_0 dx\\
	& -\int_{\partial\Omega} \frac{\partial \varphi}{\partial {\bf{n}}_{L^{*}}}
	d|\eta_1-\eta_2|
	\end{split}
	\end{align*}
	This implies that
	\begin{align*}
	\|u_1 -u_2\|_{L^1(\Omega)} + \int_\Omega (g\circ u_1  & - g\circ
	u_2)\text{sign}(u_1-u_2)\varphi_0dx \\
	& \leq \int_\Omega (f_1-f_2)\varphi_0\text{sign}(u_1-u_2) dx -
	\int_{\partial\Omega} \frac{\partial \varphi_0}{\partial
		{\bf{n}}_{L^{*}}}d|\eta_1-\eta_2|
	\end{align*}
	By the property of $g$, we have $(g\circ u_1 - g\circ
	u_2)\text{sign}(u_1-u_2)= |g\circ u_1 - g\circ u_2|$. Thus from the
	above equation it follows that
	\begin{align}
	c_{3}(\|u_1 -u_2\|_{L^1(\Omega)} & + \int_\Omega |g\circ u_1 -
	g\circ
	u_2|\cdot\rho dx)\nonumber\\
	&\leq\|u_1-u_2\|_{L^1(\Omega)}  + \int_\Omega |g\circ u_1   - g\circ
	u_2|\cdot\frac{\varphi_0}{\rho}\rho dx \nonumber\\
	& \leq \int_\Omega (f_1-f_2)\varphi_0\text{sign}(u_1-u_2) dx +
	c_0\|\eta_1-\eta_2\|_{L^1(\partial\Omega)}\nonumber\\
	& \leq c\int_\Omega |f_1-f_2|\rho dx +
	c_0\|\eta_1-\eta_2\|_{L^1(\partial\Omega)}\label{eq6}
	\end{align}
	We thus have the result
	\begin{align*}
	\|u_1-u_2\|_{L^1(\Omega)} + & \|g\circ u_1 - g\circ
	u_2\|_{L^1(\Omega; \rho)} \leq \\
	& C( \|f_1-f_2\|_{L^1(\Omega; \rho)} + \|\eta_1
	-\eta_2\|_{L^1(\partial \Omega)}),
	\end{align*}
	if $C$ is chosen to be $\max\{c/c_3,c_0/c_3\}$ where
	$c_3=\min\{1,c^{-1}\}$.\\
	This also implies that if $u \in L^1(\Omega)$ is a very weak solution of
	the boundary value problem $(\ref{eq1})$, then
	\begin{eqnarray}
	\|u\|_{L^1(\Omega)} + \|g\circ u\|_{L^1(\Omega; \rho)} \leq
	C( \|f\|_{L^1(\Omega; \rho)} + \|\eta\|_{L^1(\partial
		\Omega)})\label{eq12}
	\end{eqnarray}
	for some $C>0$.
\end{proof}
\begin{lemma}(\textbf{Comparison of solutions)}{\label{lem4}}
	Let $u_1$ and $u_2$  be very weak solutions in $L^1(\Omega)$ of the
	boundary value problem $(\ref{eq1})$ corresponding to $ f= f_1, \eta=
	\eta_1$ and $f=f_2, \eta=\eta_2$ respectively. If $f_1\leq f_2$ and
	$\eta_1 \leq \eta_2$, then $u_1\leq u_2$ a.e. in $\Omega$.
\end{lemma}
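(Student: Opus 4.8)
The plan is to reduce the claim to the sign information provided by the first inequality of Lemma~\ref{lem}, applied to the difference $w := u_1 - u_2$. As was already observed in the proof of Lemma~\ref{lem3}, subtracting the two very weak formulations satisfied by $u_1$ and $u_2$ shows that $w$ is a very weak solution of the linear problem
\begin{align*}
-L w &= (f_1 - f_2) - (g\circ u_1 - g\circ u_2) \quad\text{in }\Omega,\\
w &= \eta_1 - \eta_2 \quad\text{on }\partial\Omega,
\end{align*}
whose right-hand side $F := (f_1 - f_2) - (g\circ u_1 - g\circ u_2)$ belongs to $L^1(\Omega,\rho)$ (because $f_i \in L^1(\Omega,\rho)$ and $g\circ u_i \in L^1(\Omega,\rho)$ by the definition of a very weak solution) and whose boundary datum $\eta_1 - \eta_2$ belongs to $L^1(\partial\Omega)$.

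First I would apply the first estimate in Lemma~\ref{lem} to $w$ with datum $(F,\eta_1-\eta_2)$: for every $\varphi \in C_c^{2,L}(\bar\Omega)$ with $\varphi \geq 0$,
\begin{align*}
-\int_\Omega w_+\, L^{*}\varphi\, dx \leq \int_\Omega F\,(sign_+ w)\,\varphi\, dx - \int_{\partial\Omega}\frac{\partial\varphi}{\partial{\bf{n}}_{L^{*}}}\,d(\eta_1-\eta_2)_+ .
\end{align*}
The crux of the argument is then a pointwise sign check of the right-hand side. Since $f_1 \leq f_2$ one has $f_1 - f_2 \leq 0$; on the set $\{w > 0\} = \{u_1 > u_2\}$ the monotonicity hypothesis (b) in $(\ref{ineq2})$ gives $g\circ u_1 \geq g\circ u_2$, hence $-(g\circ u_1 - g\circ u_2) \leq 0$ there; consequently $F\,(sign_+ w) \leq 0$ a.e. in $\Omega$. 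Moreover $\eta_1 \leq \eta_2$ forces $(\eta_1-\eta_2)_+ = 0$, so the boundary integral vanishes. Therefore $-\int_\Omega w_+\, L^{*}\varphi\, dx \leq 0$ for every admissible $\varphi \geq 0$.

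To conclude, I would take $\varphi$ to be the function $\varphi_0 \in C_c^{2,L}(\bar\Omega)$ solving $-L^{*}\varphi_0 = 1$ in $\Omega$, $\varphi_0 = 0$ on $\partial\Omega$ --- the same test function used in the proof of Lemma~\ref{lem3}, whose existence, admissibility and positivity were recorded there. With this choice the left-hand side becomes $\int_\Omega w_+\, dx$, so $\int_\Omega w_+\, dx \leq 0$, and since $w_+ \geq 0$ this forces $w_+ = 0$ a.e., i.e. $u_1 \leq u_2$ a.e. in $\Omega$. The only delicate point --- and the one I expect to be the main (if modest) obstacle --- is verifying that $\varphi_0$ is genuinely admissible as a test function in Lemma~\ref{lem}, namely that $\varphi_0 \in C^2(\bar\Omega)$, $\varphi_0 = 0$ on $\partial\Omega$ and $L^{*}\varphi_0 \in L^\infty(\Omega)$, together with controlling the co-normal boundary term; here the latter difficulty evaporates because $(\eta_1 - \eta_2)_+ \equiv 0$, and the former is exactly what was established, for these coefficients, in the proof of Lemma~\ref{lem3}.
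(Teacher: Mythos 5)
Your proposal is correct and follows essentially the same route as the paper: reduce to the linear problem for $w=u_1-u_2$, invoke the first inequality of Lemma~\ref{lem}, kill the boundary term via $(\eta_1-\eta_2)_+=0$, check that the interior integrand is nonpositive using $f_1\le f_2$ and the monotonicity of $g$, and test with $\varphi_0$ solving $-L^{*}\varphi_0=1$ to conclude $\int_\Omega w_+\,dx\le 0$. Your pointwise sign check of $F\,(\mathrm{sign}_+ w)$ on the set $\{w>0\}$ is in fact a slightly cleaner packaging of the paper's splitting of the $g$-term by the sign of $g\circ u_2-g\circ u_1$.
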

\begin{proof}
	By Lemma $\ref{lem3}$, $u_1 - u_2$ is a weak solution of the problem
	$(\ref{eq3})$. Applying Lemma $\ref{lem}$ with $\varphi = \varphi_0$, where $\varphi_0$ is a solution to $(\ref{eq5})$, we
	have
	\begin{align}
	-\int_\Omega (u_1- u_2)_+L^{*}\varphi_0 \,dx \leq \int_\Omega (f_1-f_2
	& -g\circ u_1 + g\circ u_2)sign_+(u_1-u_2)\varphi_0\, dx\nonumber\\
	& -\int_{\partial\Omega}
	\frac{\partial \varphi_0}{\partial
		{\bf{n}}_{L^{*}}}d(\eta_1-\eta_2)_+.\label{eq10}
	\end{align}
	Since $\eta_1\leq \eta_2$, we have $(\eta_1-\eta_2)_+=0$. Then from
	the equation $(\ref{eq10})$, it follows that
	\begin{align}
	\int_\Omega (u_1- u_2)_+ \,dx \leq \int_\Omega (f_1 &-f_2)sign_+(u_1
	-u_2)\varphi_0\, dx\nonumber\\
	& + \int_\Omega (g\circ u_2 -g\circ u_1)sign_+(u_1-u_2)\varphi_0\,
	dx\label{eq11}
	\end{align}
	Since the test function $\varphi_0>0$ and $f_1\leq f_2$, the
	first integral in right-hand side of $(\ref{eq11})$ is less than or equal to
	zero. 
	Now taking $A= {\Omega\cap\{x\in\Omega: g\circ u_2 -g\circ u_1\geq 0\}}$ and $B= {\Omega\cap\{x\in\Omega: g\circ u_2 - g\circ u_1 <0\}}$, we have
	\begin{eqnarray}
	\int_\Omega (g\circ u_2&-&g\circ
	u_1)sign_+(u_1-u_2)\varphi_0dx\nonumber\\
	&=&\left(\int_A + \int_B\right)[(g\circ u_2 - g\circ u_1)sign_+(u_1-u_2)\varphi_0]dx\nonumber\\
	&=&\int_B(
	g\circ u_2 -
	g\circ u_1)\varphi_0dx\nonumber\\
	&\leq &0.\nonumber
	\end{eqnarray}
	Thus from $(\ref{eq11})$, we get $\displaystyle{\int_{\Omega}(u_1-u_2)_{+}dx\leq 0}$
	which shows that $(u_1-u_2)_{+}=0$. Therefore $u_1 \leq u_2$ a.e. in $\Omega$.
\end{proof}
\begin{theorem}{\label{thm1}}(\textbf{Existence of very weak solution})
	The boundary value problem given by $(\ref{eq1})$ possesses a unique very weak solution $u$ in $L^1(\Omega)$. \end{theorem}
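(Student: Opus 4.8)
The plan is to dispose of uniqueness at once and then build the solution by a three–stage approximation. Uniqueness is immediate: if $u_1,u_2\in L^1(\Omega)$ both solve $(\ref{eq1})$ with the same data, Lemma $\ref{lem3}$ with $f_1=f_2$ and $\eta_1=\eta_2$ forces $\|u_1-u_2\|_{L^1(\Omega)}=0$. So the content is existence, which I would obtain first for bounded data with a truncated nonlinearity, then pass to the true nonlinearity, and finally pass to general $L^1(\Omega,\rho)\times L^1(\partial\Omega)$ data using the contraction estimate of Lemma $\ref{lem3}$.

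\textbf{Stage 1: bounded data, truncated $g$.} Take $f\in L^\infty(\Omega)$ and $\eta$ the trace of some $W\in C^2(\bar\Omega)$, and set $g_k(x,s):=g(x,\max(-k,\min(k,s)))$, which lies in $\mathscr{G}_0$, is bounded, and satisfies $g_k(x,s)\,s\ge 0$. Writing $u=v+W$ with $v\in W_0^{1,2}(\Omega)$, the problem becomes $-Lv+g_k(\cdot,v+W)=f+LW$ in $\Omega$, $v=0$ on $\partial\Omega$, with $f+LW\in L^\infty(\Omega)\subset W^{-1,2}(\Omega)$. Since $-L$ is an isomorphism of $W_0^{1,2}(\Omega)$ onto $W^{-1,2}(\Omega)$ and $g_k$ is bounded, the map $v\mapsto(-L)^{-1}\big(f+LW-g_k(\cdot,v+W)\big)$ is compact on $W_0^{1,2}(\Omega)$ (the embedding $L^2\hookrightarrow W^{-1,2}$ being compact), and for any solution $v_t$ of $v=t(-L)^{-1}\big(f+LW-g_k(\cdot,v+W)\big)$, $t\in[0,1]$, one has $\|v_t\|_{1,2}\le C\big(\|f+LW\|_{W^{-1,2}}+\sup|g_k|\,|\Omega|^{1/2}\big)$; Schauder's fixed point theorem then yields $v_k$, and $u_k:=v_k+W$ is a weak, hence (as noted after Definition $\ref{def}$) very weak, solution of $-Lu_k+g_k\circ u_k=f$, $u_k=\eta$.

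\textbf{Stage 2: removing the truncation.} As $g_k\in\mathscr{G}_0$, estimate $(\ref{eq12})$ gives
\[
\|u_k\|_{L^1(\Omega)}+\|g_k\circ u_k\|_{L^1(\Omega,\rho)}\le C\big(\|f\|_{L^1(\Omega,\rho)}+\|\eta\|_{L^1(\partial\Omega)}\big)
\]
uniformly in $k$. Since $-Lu_k=f-g_k\circ u_k$ is then bounded in $L^1(\Omega,\rho)$ with bounded boundary datum, the linear estimates underlying Lemma $\ref{lem}$ bound $\{u_k\}$ in $W^{1,q}(\Omega)$ for some $q>1$, so along a subsequence $u_k\to u$ in $L^1(\Omega)$ and a.e., whence $g_k\circ u_k\to g\circ u$ a.e.\ since $g_k\to g$ locally uniformly. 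To upgrade this to $L^1(\Omega,\rho)$ convergence I would prove equi-integrability of $\{\rho\,g_k\circ u_k\}$: for $T>0$, on $\{|u_k|\le T\}$ one has $|g_k\circ u_k|\le|g(\cdot,T)|+|g(\cdot,-T)|\in L^1(\Omega,\rho)$ by monotonicity, while on $\{|u_k|>T\}$, testing the equation with approximations of $\operatorname{sign}_+(u_k-T)\varphi_0$ and $\operatorname{sign}_-(u_k+T)\varphi_0$ in the manner of Lemma $\ref{lem}$ gives $\int_{\{|u_k|>T\}}\rho\,|g_k\circ u_k|\le C\int_{\{|u_k|>T\}}\rho\,|f|$ (the boundary term vanishing once $T>\|\eta\|_{L^\infty}$), which tends to $0$ uniformly in $k$ as $T\to\infty$ because $|\{|u_k|>T\}|\le C/T$ and $\rho|f|\in L^1(\Omega)$. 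Vitali's theorem then yields $g_k\circ u_k\to g\circ u$ in $L^1(\Omega,\rho)$, and letting $k\to\infty$ in $(\ref{ineq3})$ — legitimate since for $\varphi\in C_c^{2,L}(\bar\Omega)$ one has $L^*\varphi\in L^\infty(\Omega)$, $\varphi/\rho\in L^\infty(\Omega)$ and $\partial\varphi/\partial{\bf{n}}_{L^{*}}\in L^\infty(\partial\Omega)$ — shows $u$ solves $(\ref{eq1})$ for this bounded data.

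\textbf{Stage 3: general data, and the main difficulty.} For $f\in L^1(\Omega,\rho)$ and $\eta\in L^1(\partial\Omega)$, pick $f_n\in L^\infty(\Omega)$ with $f_n\to f$ in $L^1(\Omega,\rho)$ and $\eta_n\in C^2(\partial\Omega)$ with $\eta_n\to\eta$ in $L^1(\partial\Omega)$, and let $u_n$ be the Stage 2 solution for $(f_n,\eta_n)$. Lemma $\ref{lem3}$ makes $\{u_n\}$ Cauchy in $L^1(\Omega)$ and $\{g\circ u_n\}$ Cauchy in $L^1(\Omega,\rho)$; let $u$ be the $L^1(\Omega)$ limit, pass to an a.e.-convergent subsequence to identify the $L^1(\Omega,\rho)$ limit of $g\circ u_n$ as $g\circ u$ (so $g\circ u\in L^1(\Omega,\rho)$), and pass to the limit term by term in $(\ref{ineq3})$ for $u_n$, exactly as in Stage 2, now also using $f_n\to f$ and $\eta_n\to\eta$ against the bounded quantities above; this exhibits $u$ as a very weak solution of $(\ref{eq1})$, and uniqueness has already been shown. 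The step I expect to fight hardest is the equi-integrability claim in Stage 2: obtaining a bound on $\rho\,g_k\circ u_k$ over the super-level sets $\{|u_k|>T\}$ that is uniform in $k$, which is where the monotonicity of $g$, the Kato-type inequalities of Lemma $\ref{lem}$, and the absolute continuity of $\rho|f|$ all have to be combined; Stages 1 and 3 are comparatively routine.
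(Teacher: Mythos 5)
Your overall architecture (truncate the nonlinearity, solve for bounded data, remove the truncation, then pass to general data by density using Lemma \ref{lem3}, with uniqueness also from Lemma \ref{lem3}) matches the paper's, and your Stage 3 and the uniqueness argument are essentially what the paper does. However, two of your key technical choices do not survive the paper's hypotheses. First, your truncation $g_k(x,s)=g(x,\max(-k,\min(k,s)))$ cuts off the \emph{argument} of $g$, not its \emph{value}; under condition (c) of (\ref{ineq2}) the functions $g(\cdot,\pm k)$ are only known to lie in $L^1(\Omega,\rho)$, so $g_k$ is in general neither bounded nor in $L^2(\Omega)$. Consequently the Schauder map need not send $W_0^{1,2}(\Omega)$ into itself, the claimed compactness through $L^2\hookrightarrow W^{-1,2}$ is unavailable, and the a priori bound involving $\sup|g_k|\,|\Omega|^{1/2}$ is vacuous. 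The paper truncates the value instead, $g_n(x,t)=\min\{g(x,|t|),n\}\,\mathrm{sign}(g)$, so that $|g_n|\le n$; with that change your Stage 1 (or the paper's variational minimization of $I_n$) goes through.

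Second, and more seriously, the equi-integrability estimate at the heart of your Stage 2, namely $\int_{\{|u_k|>T\}}\rho\,|g_k\circ u_k|\le C\int_{\{|u_k|>T\}}\rho\,|f|$, is the standard Kato/superlevel-set inequality for $-\Delta$, but for the operator $L$ of (\ref{Ldef}) the comparison of $u_k$ with the constant $T$ produces an extra source term: in non-divergence form $-L(u_k-T)=f-g_k\circ u_k-d_1T$, so the right-hand side of your inequality acquires the contribution $\int_{\{|u_k|>T\}}\rho\,|d_1|\,T$, which by $|\{|u_k|>T\}|\le C/T$ is only $O(1)$, not $o(1)$, as $T\to\infty$. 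The uniform smallness you need therefore does not follow unless the zero-order coefficients vanish. The paper sidesteps Vitali entirely at this step: for $f\ge0$ the solutions $u_n$ of the truncated problems form a \emph{monotone decreasing} sequence (because the value-truncations $g_n$ increase in $n$) bounded below by $0$ and dominated, via the comparison Lemma \ref{lem4}, by the solution $V$ of the linear problem $-LV=f$, $V=0$; hence $g\circ u_n\le g\circ V$ and dominated convergence identifies the limit. General $f$ is then handled by comparison with the solution for $|f|$, and nonzero $\eta\in C^2(\partial\Omega)$ by subtracting the $L$-harmonic lifting, exactly as in your Stage 1. You would need either to restrict to $L$ without zero-order terms, or to replace your Vitali argument by this monotonicity-and-domination scheme, for Stage 2 to close.
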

\begin{proof}
	We first prove the existence of weak solution with the test function
	space $W_0^{1,2}(\Omega)$, for the case when $f\in L^\infty(\Omega)$
	and $\eta=0$. Now, for each $n\in \mathbb{N}$, take
	$g_n(x,t)=\min\{g(x,|t|), n\}\text{sign}(g)$ and let $G_n(x, \cdot)$
	be the primitive of $g_n(x, \cdot)$ such that $G_n(x, 0)=0$. Note
	that $G_n$ is a non negative function. $u \in W_0^{1, 2}(\Omega)$ is
	a weak solution of the problem $(\ref{eq1})$ with $g=g_n$ and
	$\eta=0$ if
	$$\int_\Omega a_L(u, v)dx +
	\int_\Omega (g_n\circ u)v\,dx = \int_\Omega fv\,dx,\,\,\,\forall\,v
	\in W_0^{1, 2}(\Omega)$$ where $$\displaystyle{a_L(u, v)= \sum_{i,j=1}^N
		a_{ij}\frac{\partial u}{\partial x_j}\frac{\partial v}{\partial x_i}
		+ \sum_{i=1}^N \left(b_i \frac{\partial u}{\partial x_i}v + c_i
		\frac{\partial v}{\partial x_i}u\right) +duv}.$$
	Let $A_L(u, v)=\displaystyle{\int_\Omega a_L(u, v)dx}$, for all $u, v \in W^{1,2}_0(\Omega)$. Then
	by this definition the bilinear form is continuous on
	$W_0^{1,2}(\Omega)$ and
	$$A_L(v, v)= \int_\Omega \left(\sum_{i,j=1}^N
	a_{ij}\frac{\partial v}{\partial x_j}\frac{\partial v}{\partial x_i}
	+ \frac{1}{2}\sum_{i=1}^N (b_i +c_i) \frac{\partial v^2}{\partial
		x_i}+dv^2\right)dx.$$ By the uniqueness condition ($\ref{unieque}$), we have  $\displaystyle{\int_\Omega \left( dv^2 +\sum_{i=1}^N\frac{1}{2}(b_i
		+c_i) \frac{\partial v^2}{\partial x_i} \right)dx \geq  0}$.
	Thus from the uniform ellipticity condition $(\ref{ellip})$ we have,
	$$A_L(v, v)\geq c_1 \int_\Omega |\nabla v|^2 dx,\,\,\,\forall
	v\in W^{1,2}_0(\Omega).$$ Let us consider the functional
	$$ I_n(u) = A_L(u, u) + \int_\Omega
	(G_n\circ u) \,dx -\int_\Omega fu\,dx$$ over $ W_0^{1, 2}(\Omega)$.
	Since $u \in  W_0^{1, 2}(\Omega)$ and  $W_0^{1,
		2}(\Omega)\hookrightarrow L^2(\Omega)\hookrightarrow L^1(\Omega)$ ,
	we have
	\begin{align*}
	I_n(u) & \geq  c_1 \|\nabla u\|_2^2 + \int_\Omega (G_n\circ u)
	\,dx - \|u\|_{1}\cdot \|f\|_\infty \\
	& \geq c_1 \|\nabla u\|_2^2 + \int_\Omega (G_n\circ u) \,dx - c_2\|\nabla u\|_{2}\cdot \|f\|_\infty \\
	& = \left(c_1 \|\nabla u\|_{2}- c_2\|f\|_\infty\right)\|\nabla
	u\|_{2} + \int_\Omega (G_n\circ u) \,dx,
	\end{align*}
	where $c_1,c_2>0$ are constants. Since $G_n$ is a nonnegative
	function, it shows that $I_n(u)\rightarrow \infty$, when $\|\nabla
	u\|_{2}\rightarrow \infty$. Therefore, the functional $I_n(u)$ is
	coercive.\\
	Now we will show that the functional $I_n(u)$ is weakly lower semi-continuous. For this let $v_m \rightharpoonup u$ weakly in $W_0^{1,
		2}(\Omega)$. By Fatou's lemma,
	$$\int_\Omega G_n\circ u \,dx \leq \lim_{m\rightarrow \infty}\inf
	\int_\Omega G_n\circ v_m\,dx.$$ Now the first term of $A_L(v, v)$ is
	equivalent to the Sobolev norm of $W_0^{1,2}(\Omega)$ and $a_{ij}$'s are Lipschitz continuous functions in $\Omega$ , hence
	\begin{equation}
	\int_{\Omega}\sum_{i,j=1}^N a_{ij}\frac{\partial u}{\partial x_j}\frac{\partial u}{\partial x_i}\leq \lim_{m\rightarrow \infty} \inf \int_{\Omega}\sum_{i,j=1}^N a_{ij}\frac{\partial v_m}{\partial x_j}\frac{\partial v_m}{\partial x_i}.\label{weq1}
	\end{equation}
	Since the embedding $W_0^{1,2}(\Omega) \hookrightarrow L^2(\Omega)$ is compact therefore $v_m \rightarrow u$ in $L^2(\Omega)$ and also we have 
	$\frac{\partial v_m}{\partial x_i}\rightharpoonup \frac{\partial u}{\partial x_i}$ in $L^2(\Omega)$ for each $i=1,2,\cdots,N$. Since $b_i$'s are Lipschitz continuous functions on $\Omega$, by the strong convergence of $v_m$ in $L^2(\Omega)$ and the weak convergence of $\frac{\partial v_m}{\partial x_i}$ in $L^2(\Omega)$, one can see that 
	\begin{eqnarray*}
		\lim_{m\rightarrow\infty} \lim_{n\rightarrow\infty} \int_{\Omega} b_i v_m \frac{\partial v_m}{\partial x_i} dx &=& \lim_{n\rightarrow\infty} \lim_{m\rightarrow\infty} \int_{\Omega} b_i v_m \frac{\partial v_m}{\partial x_i} dx\nonumber\\ &=& \int_{\Omega} b_i u \frac{\partial u}{\partial x_i} dx\nonumber
	\end{eqnarray*}
	Therefore taking $m=n$, we have $\displaystyle{\int_{\Omega} b_i v_m \frac{\partial v_m}{\partial x_i} dx \rightarrow \int_{\Omega} b_i u \frac{\partial u}{\partial x_i} dx}$ as $m \rightarrow \infty$ for each $i=1,2,\cdots, N$. Thus
	\begin{equation}
	\lim_{m\rightarrow\infty}\int_\Omega\frac{1}{2}\sum_{i=1}^N (b_i +c_i) \frac{\partial v_m^2}{\partial x_i} = \int_{\Omega}\frac{1}{2}\sum_{i=1}^N (b_i +c_i) \frac{\partial u^2}{\partial x_i}.\label{weq2}
	\end{equation}
	Similarly, for the third term of $A_L(v,v)$ we have 
	\begin{equation}
	\lim_{m\rightarrow\infty}\int_{\Omega}d v_m^2 = \int_{\Omega} du^2.\label{weq3}
	\end{equation}
	Therefore, combining ($\ref{weq1}$), ($\ref{weq2}$) and ($\ref{weq3}$) we have
	\begin{align*}
	I_n(u) & \leq \lim_{m\rightarrow \infty}\inf A_L(v_m, v_m)+
	\lim_{m\rightarrow \infty}\inf \int_\Omega G_n\circ v_m\,dx - \lim
	_{m\rightarrow
		\infty} \int_\Omega fv_m\\
	&\leq \lim _{m\rightarrow \infty}\inf I_n(v_m).
	\end{align*}
	Thus $I_n(u)$ is weakly lower semi-continuous and coercive. Hence
	the variational problem $\underset{u\in W_0^{1,2}(\Omega)}{\min}
	\{I_n(u)\}$ possesses a weak solution $u_n\in W_0^{1, 2}(\Omega)$.
	The minimizer $u_n$ is a weak solution of the boundary value
	problem
	\begin{align}
	\begin{split}
	-L u + g_n\circ u & =  f \,\,\mbox{in}\,\, \Omega, \\
	u & = 0 \,\,\mbox{on}\,\, \partial\Omega, 
	\end{split}
	\end{align}
	where $f\in L^{\infty}(\Omega)$. That is $u_n \in W_0^{1, 2}(\Omega)$ satisfies, 
	\begin{equation}
	\int_{\Omega}\left(\sum_{i,j=1}^N
	a_{ij}\frac{\partial u_n}{\partial x_j}\frac{\partial v}{\partial x_i}
	+ \sum_{i=1}^N \left(b_i \frac{\partial u_n}{\partial x_i}v + c_i
	\frac{\partial v}{\partial x_i}u_n\right) +du_nv\right) + \int_{\Omega}(g_n\circ u_n)v = \int_{\Omega}fv, \label{wsol}
	\end{equation}
	for every $v\in  W_0^{1, 2}(\Omega)$. Thus by taking $v=\varphi$, where $\varphi \in C_c^{2,L}(\bar{\Omega})$ in the equation ($\ref{wsol}$) and then applying integration by parts we get 
	\begin{equation}
	-\int_{\Omega} u_n L^*\varphi + \int_{\Omega}(g_n\circ u_n)v = \int_{\Omega}f\varphi
	\end{equation} 
	for every $\varphi\in C_c^{2,L}(\bar{\Omega})$. This shows that $u_n$ is a very weak solution of the boundary value problem
	\begin{align}
	\begin{split}
	-L u + g_n\circ u & =  f \,\,\mbox{in}\,\, \Omega, \\
	u & = 0 \,\,\mbox{on}\,\, \partial\Omega, \label{eq13}
	\end{split}
	\end{align}
	where $f\in L^\infty(\Omega)$. Further, by ($\ref{eq12}$), the
	sequences $\{u_n\}$ and $\{g_n\circ u_n\}$ are bounded in $L^1(\Omega)$
	and $L^1(\Omega,\rho)$ respectively. 
	
	\noindent Now consider the case
	$f\geq 0$. Then by comparison of solutions (by the Lemma $\ref{lem4}$) we obtain $u_n\geq 0$. Since $u_n$
	is a very weak solution of the problem $(\ref{eq13})$, we write as following
	\begin{align}
	\begin{split}
	-L u_n + g_n\circ u_n & =  f \,\,\mbox{in}\,\, \Omega, \\
	u_n & = 0 \,\,\mbox{on}\,\, \partial\Omega. \label{eq14}
	\end{split}
	\end{align}
	A slight manipulation of $(\ref{eq14})$ gives the following
	\begin{align}
	\begin{split}
	-L u_n + g_{n+1}\circ u_n & =  f + g_{n+1}\circ u_n - g_n\circ u_n \,\,\mbox{in}\,\, \Omega, \\
	u_n & = 0 \,\,\mbox{on}\,\, \partial\Omega. \label{eq15}
	\end{split}
	\end{align}
	Choose $f^* =f + g_{n+1}\circ u_n - g_n\circ u_n$, then $f^*\geq f$
	on $\Omega$ because the sequence $\{g_n\}$ is monotonically
	increasing. We also have $u_{n+1}$, which is a very weak solution to the
	problem
	\begin{align}
	\begin{split}
	-L u_{n+1} + g_{n+1}\circ u_{n+1} & =  f \,\,\mbox{in}\,\, \Omega, \\
	u_{n+1} & = 0 \,\,\mbox{on}\,\, \partial\Omega. \label{eq16}
	\end{split}
	\end{align}
	Since $f^*\geq f$, hence  from ($\ref{eq15}$) and ($\ref{eq16}$) we have
	$u_{n+1}\leq u_n$. Thus $\{u_n\}$ is a bounded monotonically decreasing sequence and so by the dominated convergence theorem we have $u_n
	\rightarrow u$ in $L^1(\Omega)$, for some $u$. Therefore there
	exists a subsequence, which we will still denote as $u_n$, converges to $u$ pointwise a.e. and hence $g_n\circ u_n \rightarrow g\circ u$. Indeed,
	\begin{align*}
	g_n\circ u_n(x) & = \min \{g(x, |u_n(x)|), n\}\, sign(g)\\
	& = \min \{g(x, u_n(x)), n\}\,sign(g)\\
	& = g(x, u_n(x)),\,\,\,\mbox{for}\,\,n\geq k(x)
	\end{align*}
	From ($\ref{ineq2}$), we have $g_n\circ u_n(x) = g\circ u_n(x)
	\rightarrow g\circ u(x)$ a.e. for $n \geq k(x)$. Now by the Theorem $2.4$ of V\'{e}ron \cite{ver},
	let $V$ be the very weak solution of
	\begin{align}
	\begin{split}
	-L v & = f \,\,\mbox{in}\,\, \Omega,\\
	v & =0 \,\,\mbox{on}\,\, \partial\Omega. \label{eq17}
	\end{split}
	\end{align}
	Notice that as $u_n\geq 0$, we have $g_n\circ u_n \geq 0$. Thus,
	\begin{align*}
	-L u_n & = f- g_n \circ u_n \leq f = -L v \,\,\mbox{in}\,\, \Omega,\\
	u_m & =0\,\,\,\,\,~~~~~~~~~~~~~~~~ v= 0 \,\,\mbox{on}\,\,
	\partial\Omega.
	\end{align*}
	Therefore, by comparison of solutions, we have $u_n\leq V$ and hence
	$g\circ u_n \leq g\circ V$. In other words, if $V$ is a very weak solution of the boundary value problem $(\ref{eq17})$, then the sequence $\{g\circ u_n\}$ is dominated by
	$g\circ V$. Since $u_n \rightarrow u $ and $g\circ u_n\rightarrow
	g\circ u$ in $L^1(\Omega)$, hence $\displaystyle{\int_\Omega u_n
		L^*\varphi \rightarrow \int_\Omega u L^*\varphi}$ and
	$\displaystyle{\int_\Omega (g\circ u_n)\varphi \rightarrow \int_\Omega
		(g\circ u)\varphi}$ for all $\varphi\in C^{2,L}_c(\bar{\Omega})$. Thus we
	can conclude that $u\in L^1(\Omega)$ is a very weak solution of
	\begin{align}
	\begin{split}
	-L u + g\circ u & =  f \,\,\mbox{in}\,\, \Omega, \\
	u & = 0 \,\,\mbox{on}\,\, \partial\Omega. \label{eq18}
	\end{split}
	\end{align}
	\noindent We now drop the condition $f\geq 0$. Let $\tilde{u}_n$ be
	a very weak solution of $(\ref{eq13})$ with $f$ replaced by $|f|$. Then
	$\tilde{u}_n\geq 0$ and
	\begin{align*}
	-L u_n + g_n \circ u_n & = f  \leq |f| = -L \tilde{u}_n + g_n\circ \tilde{u}_n \,\,\mbox{in}\,\, \Omega,\\
	u_n & =0\,\,\,\,\,~~~~~~~~~~~~~~~~~~~~~~~~~~\, \tilde{u}_n= 0
	\,\,\mbox{on}\,\,\partial\Omega.
	\end{align*}
	Hence by the comparison of solutions, we have $u_n\leq \tilde{u}_n$.
	Since $g(x, -\tilde{u}_n(x))\leq 0$, hence one can show that $g_n(x,
	-\tilde{u}_n(x)) = -g_n(x, \tilde{u}_n(x))$ and also
	\begin{align}
	\begin{split}
	-L (-\tilde{u}_n )+ g_n\circ (-\tilde{u}_n )& =  -|f| \,\,\mbox{in}\,\, \Omega, \\
	-(\tilde{u}_n) & = 0 \,\,\mbox{on}\,\, \partial\Omega. \label{eq19}
	\end{split}
	\end{align}
	Again by comparison of solutions we have $-\tilde{u}_n \leq u_n$, as
	$-|f|\leq f$. Therefore, $|u_n|\leq \tilde{u}_n$. By the similar argument as previous, the sequence $\{\tilde{u}_n\}$ is bounded in $L^1(\Omega)$ and monotonically decreasing, hence $\{u_n\}$ is also a bounded monotonically decreasing sequence. Thus $u_n \rightarrow u$ in $L^1(\Omega)$, for some $u$ and therefore there exists a subsequence such that $u_n(x)\rightarrow u(x)$ a.e.. Hence $\{g_n\circ u_n\}$ converges a.e. and is dominated by $\{g_n\circ \tilde{u}_n\}$. Therefore $u$ is a very weak solution of the boundary value problem ($\ref{eq18}$). By using the density arguments in the estimates $(\ref{eq2})$, we obtain the existence
	of very weak solution for every $f\in L^1(\Omega; \rho)$.\\
	Suppose $\eta\neq 0$ and $\eta \in
	C^2(\partial \Omega)$ and let $v$ be a classical solution (refer \cite{ver}) of
	\begin{align}
	\begin{split}
	-L v & =  0 \,\,\mbox{in}\,\, \Omega, \\
	v & = \eta \,\,\mbox{on}\,\, \partial\Omega.\label{eq20}
	\end{split}
	\end{align}
	Let $w= u-v$. So we have $L (w+v)= L w$. Then the problem
	$(\ref{eq1})$ can be written as
	\begin{align}
	\begin{split}
	-L w + \tilde{g}\circ w & =  \tilde{f} \,\,\mbox{in}\,\, \Omega, \\
	w & =  0 \,\,\mbox{on}\,\, \partial\Omega,
	\end{split}
	\end{align}
	where $\tilde{g}\circ w = g(x, w(x)+v(x))- g(x, v(x))$ and
	$\tilde{f} = f - g\circ v$. Clearly $\tilde{g}\in \mathscr{G}_0$ and
	$\tilde{f}\in L^1(\Omega; \rho)$. Therefore the boundary value problem $(\ref{eq1})$ possesses a
	weak solution whenever $f\in  L^1(\Omega,\rho)$ and $\eta \in
	C^2(\partial\Omega)$.\\
	\noindent Suppose $f\in L^1(\Omega, \rho)$ and $\eta \in
	L^1({\partial\Omega})$, by density there exists a sequence $\{\eta_n\}\subset C^\infty(\partial\Omega)$ such that $\eta_n\rightarrow \eta$ in $L^1(\partial\Omega)$. To each ($f, \eta_n$), there exists a very weak solution $u_n\in L^1(\Omega)$. By estimate ($\ref{eq2}$), we
	have $u_n\rightarrow u$ in $L^1(\Omega)$ and $g\circ u_n\rightarrow
	g\circ u$ in $L^1(\Omega,\rho)$. This precisely shows that $u$ is a very weak solution of the boundary value problem ($\ref{eq1}$).
\end{proof}
\section{Semilinear problem with measure data}
In this section we prove the following main result.

\begin{theorem}{\label{thm3}}
Assume that $\{\mu_n,\nu_n\}\subset \mathfrak{M}^g(\bar\Omega)$ such that $\rho\mu_n
\xrightharpoonup[\bar{\Omega}]{} \tau$ in $\mathfrak{M}(\bar{\Omega})$ and
$\nu_n\rightharpoonup\nu$ in $\mathfrak{M}(\partial\Omega)$. Let $u_n$ be the solution of
\begin{align}
\begin{split}
-L u+g\circ u & =\mu_n\,\,\mbox{in}\,\,\Omega\\
u & =\nu_n \,\,\mbox{on}\,\,\partial\Omega 
\end{split}
\end{align}
where $g\in \mathscr{G}_0$ and suppose that $$u_n\rightarrow u\,\,\,\mbox{in}\,\,L^1(\Omega).$$ Then \\
(i) $\{\rho(g\circ u_n)\}$ converges weakly in $\bar{\Omega}$ and \\
(ii) there exists $\mu^{\#} \in \mathfrak{M}(\Omega, \rho)$,
$\nu^{\#}\in \mathfrak{M}(\partial\Omega)$ such that $u$ is a weak
solution of
\begin{align}
\begin{split}
-L u+g\circ u & =\mu^{\#}\,\,\mbox{in}\,\,\Omega\\
u & =\nu^{\#} \,\,\mbox{on}\,\,\partial\Omega .\label{eq39}
\end{split}
\end{align}
Furthermore, if $\mu_n\geq 0$ and $\nu_n\geq 0$ for every $n$, then
$$0\leq \nu^{\#}\leq (\nu +\frac{\tau}{{\bf{n}}\cdot {\bf{n}}A^T} \chi_{\partial\Omega}),$$
where ${\bf{n}}$ is the outward normal unit vector to the boundary $\partial\Omega$ and $A=(a_{ij})_{N\times N}$, the matrix corresponding to the principle part of elliptic differential operator $L$.
\end{theorem}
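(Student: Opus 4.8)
The plan is the standard compactness-and-passage-to-the-limit argument for reduced limits, carried out in the weighted setting dictated by $L$, the only genuinely $L$-dependent bookkeeping being the factor ${\bf n}\cdot{\bf n}A^T$ produced by the co-normal derivative. First I would record uniform bounds: since the weak convergences $\rho\mu_n\xrightharpoonup[\bar\Omega]{}\tau$ and $\nu_n\rightharpoonup\nu$ make $\{\rho\mu_n\}$ and $\{\nu_n\}$ bounded in $\mathfrak{M}(\bar\Omega)$ and $\mathfrak{M}(\partial\Omega)$, the a priori estimate $(\ref{eq12})$ — whose proof (test with $\varphi_0$ solving $(\ref{eq5})$, then apply Lemma~\ref{lem}) goes through verbatim for measure data, using $\varphi_0\le c\rho$ and the boundedness of $\partial\varphi_0/\partial{\bf n}_{L^*}$ — bounds $\{u_n\}$ in $L^1(\Omega)$ and $\{g\circ u_n\}$ in $L^1(\Omega,\rho)$. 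Extending $\rho(g\circ u_n)\,dx$ by $0$ to $\partial\Omega$ gives a bounded sequence in $\mathfrak{M}(\bar\Omega)$, so along a subsequence $\rho(g\circ u_n)\xrightharpoonup[\bar\Omega]{}\lambda$; along a further subsequence $u_n\to u$ a.e., hence $g\circ u_n\to g\circ u$ a.e. and $g\circ u\in L^1(\Omega,\rho)$ by Fatou. Write $\lambda=\lambda\chi_{\partial\Omega}+\rho\lambda_{int}$ and $\tau=\tau\chi_{\partial\Omega}+\rho\mu_{int}$ as in Remark~\ref{rk1}, with $\lambda_{int},\mu_{int}\in\mathfrak{M}(\Omega,\rho)$.

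Next I would pass to the limit in the very weak formulation. For $\varphi\in C^{2,L}_c(\bar\Omega)$, write $\varphi=\rho\cdot(\varphi/\rho)$ where $\varphi/\rho\in C(\bar\Omega)$ with $(\varphi/\rho)|_{\partial\Omega}=-({\bf n}\cdot{\bf n}A^T)^{-1}\,\partial\varphi/\partial{\bf n}_{L^*}$ (Corollary~\ref{lem1}, for $\varphi$ in place of $\varphi_0$); then every term of
\[
\int_\Omega\bigl(-u_nL^*\varphi+(g\circ u_n)\varphi\bigr)dx=\int_\Omega\varphi\,d\mu_n-\int_{\partial\Omega}\frac{\partial\varphi}{\partial{\bf n}_{L^*}}\,d\nu_n
\]
converges: $\int u_nL^*\varphi\to\int uL^*\varphi$ as $u_n\to u$ in $L^1$ and $L^*\varphi\in L^\infty$; $\int(g\circ u_n)\varphi=\int_{\bar\Omega}(\varphi/\rho)\,d(\rho\,g\circ u_n)\to\int_{\bar\Omega}(\varphi/\rho)\,d\lambda$ and $\int\varphi\,d\mu_n=\int_{\bar\Omega}(\varphi/\rho)\,d(\rho\mu_n)\to\int_{\bar\Omega}(\varphi/\rho)\,d\tau$; and $\int_{\partial\Omega}(\partial\varphi/\partial{\bf n}_{L^*})\,d\nu_n\to\int_{\partial\Omega}(\partial\varphi/\partial{\bf n}_{L^*})\,d\nu$. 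The resulting identity fixes $\int_{\bar\Omega}(\varphi/\rho)\,d\lambda$ for every $\varphi\in C^{2,L}_c(\bar\Omega)$, and since $\varphi_0h\in C^{2,L}_c(\bar\Omega)$ for all $h\in C^\infty(\bar\Omega)$ with $c^{-1}\le\varphi_0/\rho\le c$, the functions $\varphi/\rho$ are dense in $C(\bar\Omega)$; hence $\lambda$ is uniquely determined, the full sequence $\{\rho(g\circ u_n)\}$ converges weakly in $\bar\Omega$ to $\lambda$, and (i) is proved. Splitting each $\int_{\bar\Omega}(\varphi/\rho)\,d(\cdot)$ into its $\Omega$-part (which, after the factor $\rho$, becomes $\int_\Omega\varphi\,d\lambda_{int}$, resp. $\int_\Omega\varphi\,d\mu_{int}$) and its $\partial\Omega$-part (where $\varphi/\rho=-({\bf n}\cdot{\bf n}A^T)^{-1}\partial\varphi/\partial{\bf n}_{L^*}$), the limit identity becomes exactly the very weak solution identity $(\ref{ineq3})$ for $u$ with
\[
\mu^{\#}:=\mu_{int}-\lambda_{int}+g\circ u\in\mathfrak{M}(\Omega,\rho),\qquad \nu^{\#}:=\nu+\frac{1}{{\bf n}\cdot{\bf n}A^T}\bigl(\tau\chi_{\partial\Omega}-\lambda\chi_{\partial\Omega}\bigr)\in\mathfrak{M}(\partial\Omega),
\]
(identifying $g\circ u$ with $g\circ u\,dx$, and using $g\circ u\in L^1(\Omega,\rho)$), so $u$ solves $(\ref{eq39})$; this is (ii). In the nonnegative case one also gets $\lambda_{int}\ge g\circ u$ as measures on $\Omega$ from Fatou, hence $\mu^{\#}\le\mu_{int}$ — the analogue of Brezis et al.'s ``reduced measure $\le\mu$''.

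Finally, assume $\mu_n,\nu_n\ge0$. Comparison with the zero solution of $-Lw+g\circ w=0,\ w=0$ (Lemma~\ref{lem4}) gives $u_n\ge0$, hence $g\circ u_n\ge0$, so $\lambda\ge0$ and $\tau\ge0$; as ${\bf n}\cdot{\bf n}A^T>0$ by $(\ref{ellip})$, discarding the nonnegative term $({\bf n}\cdot{\bf n}A^T)^{-1}\lambda\chi_{\partial\Omega}$ yields $\nu^{\#}\le\nu+\frac{\tau}{{\bf n}\cdot{\bf n}A^T}\chi_{\partial\Omega}$. For the lower bound $\nu^{\#}\ge0$ I would decompose, by linearity, $u_n=z_n+\hat w_n-\tilde w_n$, where $z_n,\hat w_n,\tilde w_n\ge0$ are the very weak solutions of the linear problems with data $(0,\nu_n)$, $(\mu_n,0)$, $(g\circ u_n,0)$; by $L^1$-stability of the linear Dirichlet problem for $L$ under weak convergence of the data — the $g\equiv0$ instance of the present phenomenon, in which the boundary part of $\lim\rho\theta_n$ reappears as Dirichlet data weighted by $({\bf n}\cdot{\bf n}A^T)^{-1}$; cf.\ \cite{ver,veron} — these converge in $L^1(\Omega)$ to the very weak solutions with data $(0,\nu)$, $\bigl(\mu_{int},\frac{\tau\chi_{\partial\Omega}}{{\bf n}\cdot{\bf n}A^T}\bigr)$, $\bigl(\lambda_{int},\frac{\lambda\chi_{\partial\Omega}}{{\bf n}\cdot{\bf n}A^T}\bigr)$. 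Then $u=z+\hat w-\tilde w$, so $\tilde w=\hat w+z-u\le\hat w+z$ a.e.; since a nonnegative very weak solution of a linear problem has a nonnegative boundary trace (its trace being a weak-$*$ limit of its nonnegative restrictions to the level sets $\{\rho=t\}$; cf.\ \cite{veron}), comparing the traces of $\tilde w$ and $\hat w+z$ gives $\frac{\lambda\chi_{\partial\Omega}}{{\bf n}\cdot{\bf n}A^T}\le\frac{\tau\chi_{\partial\Omega}}{{\bf n}\cdot{\bf n}A^T}+\nu$, i.e.\ $\nu^{\#}\ge0$. Everything through (ii) is soft once $(\ref{eq12})$ is available for measure data; the real obstacle is precisely this lower bound — the assertion that the mass of $\rho(g\circ u_n)$ escaping to the boundary is controlled by $\tau\chi_{\partial\Omega}$ and $\nu$ — and its two ingredients (stability of the linear problem under weak convergence of measure data, and order-preservation of boundary traces of very weak solutions) are exactly where the lack of self-adjointness of $L$ has to be handled with care, being imported from the linear theory of \cite{ver,veron}.
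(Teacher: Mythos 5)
Your proof is correct and, in its core, identical to the paper's: the same a priori bound (Theorem \ref{thm6}) gives weak compactness of $\{\rho(g\circ u_n)\}$ in $\mathfrak{M}(\bar{\Omega})$, the same $\varphi/\rho$ computation (Lemma \ref{lem1} together with Remark \ref{rk1}) is used to pass to the limit in the very weak formulation, and you land on exactly the paper's formulas $\mu^{\#}=g\circ u-(\lambda_{int}-\mu_{int})$ and $\nu^{\#}=\nu-({\bf{n}}\cdot{\bf{n}}A^T)^{-1}(\lambda_{bd}-\tau_{bd})$, with the upper bound obtained in the same way from $\lambda\geq 0$ and ${\bf{n}}\cdot{\bf{n}}A^T>0$. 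You deviate in two places, both worth noting. First, the paper only extracts a weakly convergent subsequence of $\{\rho(g\circ u_n)\}$ and never upgrades to the full sequence, even though claim (i) is stated for the full sequence; your density argument (test functions $\varphi_0 h$ with $h\in C^{\infty}(\bar{\Omega})$, using $c^{-1}\leq\varphi_0/\rho\leq c$) shows that $\lambda$ is determined by $(u,\tau,\nu)$ independently of the subsequence, which closes this gap. Second, for the lower bound $\nu^{\#}\geq 0$ the paper's (implicit) argument is the short one: $u\geq 0$ as an a.e.\ limit of nonnegative functions, $u$ is a very weak solution of (\ref{eq39}), hence by Proposition \ref{prop} its $M$-boundary trace equals $\nu^{\#}$, and the trace of a nonnegative function is nonnegative by Definition \ref{trdef} (this is exactly how the analogous step is phrased in the proof of Theorem \ref{thm2}). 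Your detour through the linear decomposition $u_n=z_n+\hat{w}_n-\tilde{w}_n$ and the $L^1$-stability of the linear problem under weak convergence of measure data reaches the same conclusion and is sound, but it imports more of the linear theory of \cite{ver,veron} than is needed; you may simply quote the trace characterization once $u\geq 0$ is known.
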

\noindent The measures $\mu^{\#}$ and $\nu^{\#}$ are called reduced
limit of the sequences of measures $\{\mu_n\}$ and $\{\nu_n\}$
respectively. We divide the proof into several lemmas and theorems.
We now begin with the following existence theorem.
\begin{theorem}{\label{thm6}}
Consider the boundary value problem
\begin{align}
\begin{split}
-L u + g\circ u & =  \mu \,\,\mbox{in}\,\, \Omega \\
u & =  \nu \,\,\mbox{on}\,\, \partial\Omega \label{eq21}
\end{split}
\end{align}
with $g\in \mathscr{G}_0$, $\mu \in \mathfrak{M}(\Omega, \rho)$ and
$\nu \in \mathfrak{M}(\partial \Omega)$. If a solution exists, then 
\begin{equation}
||u||_{L^1(\Omega)}+||g\circ u||_{L^1(\Omega, \rho)}\leq C(||\mu||_{\mathfrak{M}(\Omega, \rho)}+||\nu||_{\mathfrak{M}(\partial\Omega)})\label{estimate}
\end{equation}
If $u_i\in L^1(\Omega)$ are
very weak solutions corresponding to $\mu=\mu_i$, for $i=1,2$, then
we have the following estimate
\begin{eqnarray}
||u_1-u_2||_{L^1(\Omega)} + ||g\circ u_1-g\circ u_2||_{L^1(\Omega;
\rho)} \leq
 C( ||\mu_1-\mu_2||_{\mathfrak{M}(\Omega, \rho)} + ||\nu_1-\nu_2||_{\mathfrak{M}(\partial
 \Omega)})\label{eq22}
 \end{eqnarray}
Furthermore, if $\mu_1\leq \mu_2$, $\nu_1\leq\nu_2$ then $u_1\leq
u_2$. This also implies that the problem in $(\ref{eq21})$ possesses at most one very weak solution $u\in L^1(\Omega)$ if at all a solution exists to it.
\end{theorem}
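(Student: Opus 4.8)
The plan is to reduce the statement to the $L^{1}$-theory of Section~2 plus one new ingredient, a Kato-type inequality for $L$ with measure data, and then to transcribe the arguments of Lemmas~$\ref{lem3}$ and~$\ref{lem4}$. Two reductions come for free. First, $(\ref{estimate})$ is the special case of $(\ref{eq22})$ with $u_{2}\equiv 0$: since $g(x,0)=0$ and $L0=0$, the function $0$ is a very weak solution of $(\ref{eq21})$ with data $(0,0)$, and $g\circ 0=0$. Second, once the comparison statement is known, uniqueness is immediate: if $u$ and $u'$ are very weak solutions with the same data $(\mu,\nu)$, applying the comparison with $\mu_{1}=\mu_{2}=\mu$, $\nu_{1}=\nu_{2}=\nu$ gives both $u\le u'$ and $u'\le u$. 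So only $(\ref{eq22})$ and the comparison remain.

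The key step I would establish first is the following measure-data refinement of Lemma~$\ref{lem}$: if $\sigma\in\mathfrak{M}(\Omega,\rho)$, $h\in L^{1}(\Omega,\rho)$, $\beta\in\mathfrak{M}(\partial\Omega)$ and $w$ is the (unique, by the classical linear theory of \cite{ver}) very weak solution of $-Lw=\sigma+h$ in $\Omega$, $w=\beta$ on $\partial\Omega$, then for every $0\le\varphi\in C^{2,L}_{c}(\bar{\Omega})$,
\begin{equation}
-\int_{\Omega}|w|\,L^{*}\varphi\,dx\;\le\;\int_{\Omega}\varphi\,d|\sigma|\;+\;\int_{\Omega}(\operatorname{sign}w)\,h\,\varphi\,dx\;-\;\int_{\partial\Omega}\frac{\partial\varphi}{\partial{\bf{n}}_{L^{*}}}\,d|\beta|,
\end{equation}
together with the one-sided version (with $w_{+}$, $\sigma_{+}$, $\beta_{+}$, $\operatorname{sign}_{+}w$). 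I would prove this by density: mollify the Jordan decompositions of $\sigma$ and $\beta$ to obtain $f_{k}\in C^{\infty}_{c}(\Omega)$ and $\eta_{k}\in C^{\infty}(\partial\Omega)$ with $f_{k}\,dx\rightharpoonup\sigma$ in $\mathfrak{M}(\Omega,\rho)$, $\eta_{k}\rightharpoonup\beta$ in $\mathfrak{M}(\partial\Omega)$ and $\|f_{k}\|_{L^{1}(\Omega,\rho)}\to\|\sigma\|_{\mathfrak{M}(\Omega,\rho)}$, $\|\eta_{k}\|_{L^{1}(\partial\Omega)}\to\|\beta\|_{\mathfrak{M}(\partial\Omega)}$ (and, for the one-sided inequality, with $f_{k}$, $\eta_{k}$ inheriting the sign of $\sigma$, $\beta$ when these are signed); let $w_{k}$ solve $-Lw_{k}=f_{k}+h$, $w_{k}=\eta_{k}$ (Lemma~$\ref{lem}$); apply the $L^{1}$ Kato inequality of Lemma~$\ref{lem}$ to $w_{k}$; and pass to the limit, using that for the linear problem weak convergence of the data forces $L^{1}(\Omega)$-convergence of the solutions (classical: $w_{k}$ is bounded in a space compactly embedded in $L^{1}(\Omega)$, and every $L^{1}$-limit solves the limit problem, which is unique), so that along a subsequence $w_{k}\to w$ a.e. I expect this limit passage to be the only delicate point: the measure term must land on the right-hand side bounded \emph{above}, which is why I insist on $\|f_{k}\|_{L^{1}(\Omega,\rho)}\to\|\sigma\|_{\mathfrak{M}(\Omega,\rho)}$ rather than mere weak convergence; and $\int_{\Omega}(\operatorname{sign}w_{k})h\varphi\,dx$ must be controlled across the limit, which is harmless because, in the application $w=u_{1}-u_{2}$, the datum $h=\pm(g\circ u_{1}-g\circ u_{2})$ vanishes a.e.\ on $\{w=0\}$, so the set on which $\operatorname{sign}w_{k}$ may fail to converge contributes nothing.

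Granting the displayed inequality, the rest copies Section~2. Subtracting the very weak formulations of $(\ref{eq21})$ for $u_{1}$ and $u_{2}$ shows that $w:=u_{1}-u_{2}$ is the very weak solution of $-Lw=(\mu_{1}-\mu_{2})-(g\circ u_{1}-g\circ u_{2})$ in $\Omega$, $w=\nu_{1}-\nu_{2}$ on $\partial\Omega$. Apply the inequality with $\sigma=\mu_{1}-\mu_{2}$, $h=-(g\circ u_{1}-g\circ u_{2})$, $\beta=\nu_{1}-\nu_{2}$ and the test function $\varphi_{0}$ of $(\ref{eq5})$, which (as in the proof of Lemma~$\ref{lem3}$) lies in $C^{2,L}_{c}(\bar{\Omega})$, satisfies $-L^{*}\varphi_{0}=1$ and $c^{-1}\rho\le\varphi_{0}\le c\rho$, and has $-\partial\varphi_{0}/\partial{\bf{n}}_{L^{*}}$ bounded (Hopf's lemma and $(\ref{ellip})$). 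Since $g(x,\cdot)$ is nondecreasing, $(\operatorname{sign}w)\,h=-|g\circ u_{1}-g\circ u_{2}|$, so using $-L^{*}\varphi_{0}=1$ one gets
\begin{equation}
\|u_{1}-u_{2}\|_{L^{1}(\Omega)}+\int_{\Omega}|g\circ u_{1}-g\circ u_{2}|\,\varphi_{0}\,dx\;\le\;\int_{\Omega}\varphi_{0}\,d|\mu_{1}-\mu_{2}|\;-\;\int_{\partial\Omega}\frac{\partial\varphi_{0}}{\partial{\bf{n}}_{L^{*}}}\,d|\nu_{1}-\nu_{2}|,
\end{equation}
and the two-sided bound on $\varphi_{0}$ together with the boundedness of $\partial\varphi_{0}/\partial{\bf{n}}_{L^{*}}$ turns this into $(\ref{eq22})$, exactly as in $(\ref{eq6})$. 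For the comparison one uses instead the one-sided inequality: when $\mu_{1}\le\mu_{2}$ and $\nu_{1}\le\nu_{2}$ the terms $\int_{\Omega}\varphi_{0}\,d(\mu_{1}-\mu_{2})_{+}$ and $\int_{\partial\Omega}\frac{\partial\varphi_{0}}{\partial{\bf{n}}_{L^{*}}}\,d(\nu_{1}-\nu_{2})_{+}$ vanish, and on $\{u_{1}>u_{2}\}$ one has $g\circ u_{2}-g\circ u_{1}\le 0$ by monotonicity of $g$ (the computation leading to $(\ref{eq7'})$), so $\int_{\Omega}(u_{1}-u_{2})_{+}\,dx\le 0$, i.e.\ $u_{1}\le u_{2}$ a.e.; uniqueness follows as explained above.
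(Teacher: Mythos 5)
Your proposal is correct and follows the route the paper itself intends: the paper's entire proof of this theorem is the one-line remark that the argument runs along the same lines as Lemmas \ref{lem3}, \ref{lem4} and Theorem \ref{thm1}, so there is no competing argument to deviate from. What you add --- and what that one-line remark silently presupposes --- is the measure-data version of the Kato inequality of Lemma \ref{lem}, obtained by approximating the Jordan decompositions of $\mu_1-\mu_2$ and $\nu_1-\nu_2$ by smooth data and passing to the limit in the linear problem; this is exactly the missing ingredient, and your observation that the term $\int_\Omega(\mathrm{sign}\,w_k)\,h\,\varphi\,dx$ is harmless in the limit because $h=g\circ u_2-g\circ u_1$ vanishes a.e.\ on $\{u_1=u_2\}$ is the right way to close that passage. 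The one detail worth spelling out further is the approximation step itself: a measure in $\mathfrak{M}(\Omega,\rho)$ need not have finite total variation near $\partial\Omega$, so the mollified $f_k$ should be built from $\sigma^{\pm}\chi_{\Omega_k}$ along an exhaustion, with convergence tested against functions that are $O(\rho)$ --- which every $\varphi\in C^{2,L}_c(\bar{\Omega})$ is, so the terms $\int_\Omega\varphi\,d|\sigma|$ remain finite and the upper semicontinuity you need ($\limsup\int\varphi|f_k|\,dx\leq\int\varphi\,d|\sigma|$) goes through. With that caveat your derivation of $(\ref{eq22})$, the comparison statement, and the two reductions (taking $u_2\equiv 0$ for $(\ref{estimate})$, and two-sided comparison for uniqueness) are all sound.
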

\begin{proof}
The proof runs along the same lines as that of the corresponding
Lemmas ($2.2$), ($2.3$) and Theorem ($2.4$) in the previous section.
\end{proof}
In contrast to the case of when $L=\Delta$ with $L^1$ data, the
problem with measure data does not necessarily possess a solution. It may so happen that
$\mu_n \rightharpoonup \delta_0$ and $u_n \rightarrow 0$ in $L^1(\Omega)$,
although $0$ is not a solution of ($\ref{eq21}$) with $L=\Delta$,
$\mu=\delta_0$ and $\nu=0$ \cite{brez}. However, if a solution exists then it is unique
and the inequality ($\ref{estimate}$) remain valid.\\
\noindent The following corollary is an immediate consequence of the
definition of a good measure and Theorem $\ref{thm6}$.
\begin{corollary}{\label{thm8}}
Assume that $(\mu, \nu) \in \mathfrak{M}^g(\bar{\Omega})$. Then the
boundary value problem ($\ref{eq21}$) possess a unique very weak solution
in $L^{1}(\Omega)$.
\end{corollary}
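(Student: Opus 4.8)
The plan is simply to read off existence from the definition of the good-measure set and to read off uniqueness from the a priori estimates already established in Theorem \ref{thm6}; no new machinery is needed.

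First, for existence: by definition, the statement $(\mu,\nu)\in\mathfrak{M}^g(\bar\Omega)$ means precisely that the pair $(\mu,\nu)\in\mathfrak{M}(\Omega,\rho)\times\mathfrak{M}(\partial\Omega)$ is such that the boundary value problem (\ref{ineq1}) — which is the same as (\ref{eq21}) — possesses a very weak solution $u\in L^1(\Omega)$ in the sense of Definition \ref{def}. Hence existence is immediate and there is nothing further to prove on that point.

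Second, for uniqueness: suppose $u_1,u_2\in L^1(\Omega)$ are both very weak solutions of (\ref{eq21}) with the same data $(\mu,\nu)$. Apply the stability estimate (\ref{eq22}) of Theorem \ref{thm6} with $\mu_1=\mu_2=\mu$ and $\nu_1=\nu_2=\nu$: the right-hand side is
$$C\bigl(\|\mu-\mu\|_{\mathfrak{M}(\Omega,\rho)}+\|\nu-\nu\|_{\mathfrak{M}(\partial\Omega)}\bigr)=0,$$
so $\|u_1-u_2\|_{L^1(\Omega)}+\|g\circ u_1-g\circ u_2\|_{L^1(\Omega,\rho)}=0$, forcing $u_1=u_2$ a.e. in $\Omega$. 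Equivalently, one may invoke directly the final assertion of Theorem \ref{thm6}, that (\ref{eq21}) admits at most one very weak solution in $L^1(\Omega)$. Combining the two paragraphs gives that (\ref{eq21}) possesses a unique very weak solution in $L^1(\Omega)$.

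There is essentially no obstacle here, the corollary being a formal consequence of what precedes; the one point deserving a word of care is that the comparison and stability assertions of Theorem \ref{thm6} must be known to hold for genuine measure data $(\mu,\nu)\in\mathfrak{M}(\Omega,\rho)\times\mathfrak{M}(\partial\Omega)$ and not merely for $L^1$ data — but this is exactly the content of Theorem \ref{thm6}, whose proof runs parallel to Lemmas \ref{lem3}, \ref{lem4} and Theorem \ref{thm1}, so it may be used as a black box.
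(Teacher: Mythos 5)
Your argument is correct and coincides with the paper's: the paper also derives this corollary immediately from the definition of $\mathfrak{M}^g(\bar{\Omega})$ (existence) together with the uniqueness assertion of Theorem \ref{thm6} (via the stability estimate (\ref{eq22})). Nothing further is needed.
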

We state the following theorem.
\begin{theorem}{\label{thm7}}
Assume that $(\mu, \nu) \in \mathfrak{M}^g(\bar{\Omega})$ with
$\mu\geq 0$ and $\nu\geq 0$. Then the very weak solution $u$ of the boundary value problem ($\ref{eq21}$), is in $L^p(\Omega)$ for $1\leq p <
\frac{N}{N-1}$ and there exists a constant $C(p)$ such that
\begin{align}
||u||_{L^p(\Omega)} \leq C(p)(||\mu||_{\mathfrak{M}(\Omega, \rho)} +
||\nu||_{\mathfrak{M}(\partial\Omega)})\label{eq26}
\end{align}
\end{theorem}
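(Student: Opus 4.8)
The plan is to reduce the semilinear problem to the purely linear one by a comparison argument, and then to estimate the linear solution via its Green and Poisson representations, the weighted $L^p$ bounds coming out of the two‑sided kernel estimates already available for $L$.

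First I would record two sign facts. Since $g(x,0)=0$, the zero function is the very weak solution of $(\ref{eq21})$ with data $(0,0)$; as $\mu\geq0$ and $\nu\geq0$, the comparison statement in Theorem~\ref{thm6} gives $u\geq0$ a.e. in $\Omega$, and hence $g\circ u\geq0$ because $g(x,\cdot)$ is nondecreasing. Next, let $w\in L^1(\Omega)$ be the very weak solution (which exists by Lemma~\ref{lem}) of the linear problem $-Lw=\mu$ in $\Omega$, $w=\nu$ on $\partial\Omega$. Moving $g\circ u$ to the right, $u$ is the very weak solution of $-Lu=\mu-g\circ u$ in $\Omega$, $u=\nu$ on $\partial\Omega$, with $\mu-g\circ u\leq\mu$ in $\mathfrak{M}(\Omega,\rho)$; since both $u$ and $w$ solve the linear problem (i.e.\ $(\ref{eq21})$ with nonlinearity $0\in\mathscr{G}_0$) with ordered data, Theorem~\ref{thm6} yields $0\leq u\leq w$ a.e. Therefore $\|u\|_{L^p(\Omega)}\leq\|w\|_{L^p(\Omega)}$ for every $p\geq1$, and it suffices to prove $(\ref{eq26})$ for $w$.

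Now I would use the representation $w(x)=\int_\Omega G_L^\Omega(x,y)\,d\mu(y)+\int_{\partial\Omega}P_L^\Omega(x,\xi)\,d\nu(\xi)$, where $G_L^\Omega$ is the Green function of $L$ on $\Omega$ and $P_L^\Omega$ the associated Poisson kernel (classical for our $L$ under the Lipschitz, ellipticity $(\ref{ellip})$ and uniqueness $(\ref{unieque})$ assumptions; see \cite{ver,veron}). The key pointwise input is Theorem~2.11 of \cite{ver} together with Zhao's estimate \cite{zhao}, which give $0<G_L^\Omega(x,y)\leq C\,G_{-\Delta}^\Omega(x,y)\leq C\,|x-y|^{2-N}\min\{1,\rho(x)\rho(y)|x-y|^{-2}\}$ (with the usual logarithmic replacement when $N=2$) and $P_L^\Omega(x,\xi)\leq C\,\rho(x)\,|x-\xi|^{-N}$. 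For fixed $y\in\Omega$ I would split $\Omega$ into $\{|x-y|<\rho(y)/2\}$, where $G_L^\Omega(x,y)\approx|x-y|^{2-N}$, and its complement, where $G_L^\Omega(x,y)\leq C\,\rho(y)|x-y|^{1-N}$; since $\Omega$ is bounded and $1\leq p<\frac{N}{N-1}$ forces both $(2-N)p+N>0$ and $(1-N)p+N>0$, a direct computation gives $\|G_L^\Omega(\cdot,y)\|_{L^p(\Omega)}\leq C(p)\,\rho(y)$. Likewise, flattening $\partial\Omega$ near $\xi$ and using the bound on $P_L^\Omega$ gives $\|P_L^\Omega(\cdot,\xi)\|_{L^p(\Omega)}\leq C(p)$ uniformly in $\xi\in\partial\Omega$, exactly in the range $p<\frac{N}{N-1}$. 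Then by Minkowski's integral inequality,
\[
\|w\|_{L^p(\Omega)}\leq\int_\Omega\|G_L^\Omega(\cdot,y)\|_{L^p(\Omega)}\,d\mu(y)+\int_{\partial\Omega}\|P_L^\Omega(\cdot,\xi)\|_{L^p(\Omega)}\,d\nu(\xi)\leq C(p)\Big(\|\mu\|_{\mathfrak{M}(\Omega,\rho)}+\|\nu\|_{\mathfrak{M}(\partial\Omega)}\Big),
\]
and combining with $\|u\|_{L^p(\Omega)}\le\|w\|_{L^p(\Omega)}$ yields $(\ref{eq26})$.

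The main obstacle is the kernel step: obtaining and correctly book‑keeping the two‑sided Green/Poisson estimates for the non–self–adjoint $L$, and carrying out the weighted $L^p$ integrals so that the borderline exponent $\frac{N}{N-1}$ is sharp; once the comparability of $G_L^\Omega$ with $G_{-\Delta}^\Omega$ from \cite{ver,zhao} is invoked, the rest is a routine if slightly tedious computation. As a self‑contained alternative I would keep in reserve a Boccardo–Gallou\"et type argument: test the equation for $w$ against truncations built from the solution $\varphi_0$ of $(\ref{eq5})$, derive a Marcinkiewicz (weak‑$L^{N/(N-1)}$, weak‑$W^{1,N/(N-1)}$) bound, and conclude $w\in L^p(\Omega)$ for $p<\frac{N}{N-1}$; this avoids explicit kernels at the cost of a longer argument.
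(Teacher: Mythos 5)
Your proposal is correct and follows essentially the same route as the paper, which simply cites the Green-function representation of solutions from V\'eron's work together with the stability estimate (\ref{eq22}); you have filled in the details the paper leaves implicit, namely the comparison reduction $0\le u\le w$ to the linear problem and the weighted $L^p$ computation on the Green and Poisson kernels that produces the exponent range $p<\frac{N}{N-1}$. No gap.
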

\begin{proof}
The range of $p$ can be found by using the Green function of the
elliptic operator $L$ which is obtained in the work of
V\'{e}ron (\cite{ver}). Note that in our case we are considering $p$
is strictly less than $N$. The estimate is an immediate consequence
of ($\ref{eq22}$) and the notion of representing the solution in terms
of Green's function.
\end{proof}
\begin{corollary}{\label{cor1}}
Under the assumptions made in Theorem $\ref{thm7}$, the solution $u$
of the problem ($\ref{eq21}$) is in $ W^{1,p}_{loc}(\Omega)$ for $ \in
\big[1, \frac{N}{N-1}\big)$. Also for every relatively compact
domain $\Omega '$ in $\Omega$, there exists a constant $C(q)$ such
that
\begin{align}
||u||_{L^p(\Omega ')} \leq C(p)(||\mu||_{\mathfrak{M}(\Omega ')} +
||\nu||_{\mathfrak{M}(\partial\Omega)}).\label{eq27}
\end{align}
\end{corollary}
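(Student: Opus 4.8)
The plan is to upgrade the global $L^p$ bound of Theorem \ref{thm7} to a local $W^{1,p}$ estimate by a standard interior regularity/bootstrap argument for the linear operator $L$, viewing $g\circ u$ as an $L^1$-datum that is locally much better behaved. First I would fix a relatively compact subdomain $\Omega'\Subset\Omega$ and choose an intermediate open set $\Omega''$ with $\Omega'\Subset\Omega''\Subset\Omega$; on $\Omega''$ the weight $\rho(x)=\operatorname{dist}(x,\partial\Omega)$ is bounded below by a positive constant, so the hypothesis $\mu\in\mathfrak M(\Omega,\rho)$ gives $\mu\llcorner\Omega''\in\mathfrak M(\Omega'')$ with $\|\mu\|_{\mathfrak M(\Omega'')}\le c(\Omega'')\|\mu\|_{\mathfrak M(\Omega,\rho)}$, and likewise $g\circ u\in L^1(\Omega'')$ with a comparable bound coming from \eqref{estimate}. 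Thus on $\Omega''$ the function $u$ is a very weak solution of $-Lu = \mu - g\circ u =: \sigma \in \mathfrak M(\Omega'')$.

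Next I would invoke the interior regularity theory for second order linear elliptic equations with measure (or $L^1$) right-hand side — the same Green-function/potential estimates of V\'eron \cite{ver} used in Theorem \ref{thm7}, now applied locally. The key point is that the Newtonian-type potential of a finite measure lies in $W^{1,q}_{loc}$ for every $q<\frac N{N-1}$, with the local $W^{1,q}$ norm controlled by the total mass of the measure; since the coefficients $a_{ij},b_j,c_j,d$ of $L$ are Lipschitz, the Green function $G_L^{\Omega}$ is comparable to that of $-\Delta$ (Theorem 2.11 in \cite{ver}) together with the analogous gradient comparison, so the same exponent range $p\in[1,\frac N{N-1})$ is admissible for $\nabla u$. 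Representing $u$ locally as $u = \int_{\Omega''} G_L^{\Omega''}(x,y)\,d\sigma(y)$ plus a harmonic-type correction that is smooth in $\Omega'$, differentiating under the integral, and estimating $\|\nabla_x G_L^{\Omega''}(x,\cdot)\|$ in the appropriate Lorentz/weak-Lebesgue class yields $u\in W^{1,p}_{loc}(\Omega)$ with
\[
\|u\|_{W^{1,p}(\Omega')}\le C(p,\Omega',\Omega'')\big(\|\sigma\|_{\mathfrak M(\Omega'')}\big)\le C(p)\big(\|\mu\|_{\mathfrak M(\Omega')}+\|\nu\|_{\mathfrak M(\partial\Omega)}\big),
\]
where in the last step I absorb $\|g\circ u\|_{L^1(\Omega'')}$ and the remaining boundary contribution using \eqref{eq26} and \eqref{estimate}; the estimate \eqref{eq27} for $\|u\|_{L^p(\Omega')}$ is then just the weaker companion bound (and is already contained in Theorem \ref{thm7}).

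The main obstacle is the gradient estimate for the Green function of the general operator $L$: unlike the pure $L^p$ bound, controlling $\nabla_x G_L^\Omega(x,y)$ in the critical class $L^{N/(N-1),\infty}$ uniformly requires the Lipschitz regularity of the $a_{ij}$ (so that $G_L$ enjoys $C^1$ off-diagonal bounds comparable to $|x-y|^{1-N}$) and a careful treatment near $\partial\Omega''$ — which is why I interpose $\Omega''$ so that all boundary effects occur away from $\Omega'$ and are smoothing there. Everything else — the lower bound on $\rho$ on compact subsets, the passage from $\mathfrak M(\Omega,\rho)$ to $\mathfrak M(\Omega'')$, and absorbing $g\circ u$ via the a priori estimate — is routine.
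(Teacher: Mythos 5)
The paper offers no proof of this corollary at all: it is stated as an immediate consequence of Theorem \ref{thm7}, whose own ``proof'' is a one-line citation of the Green-function estimates in V\'eron \cite{ver}. Your proposal is therefore more detailed than anything the paper provides, and the route you take --- localizing to $\Omega'\Subset\Omega''\Subset\Omega$ where $\rho$ is bounded below, viewing $u$ on $\Omega''$ as a very weak solution of $-Lu=\mu-g\circ u=:\sigma$ with $\sigma\in\mathfrak{M}(\Omega'')$, and applying gradient estimates for the Green function in the weak-$L^{N/(N-1)}$ class (legitimate here because the $a_{ij}$ are Lipschitz and $G_L$ is comparable to $G_{-\Delta}$ by Theorem 2.11 of \cite{ver}) --- is exactly the standard argument the authors are implicitly invoking. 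On the qualitative claim $u\in W^{1,p}_{loc}(\Omega)$ for $p\in\bigl[1,\tfrac{N}{N-1}\bigr)$ your sketch is sound.

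Your final chain of inequalities, however, has a genuine gap, and it exposes a defect in the statement \eqref{eq27} itself. You pass from $\|u\|_{W^{1,p}(\Omega')}\le C\,\|\sigma\|_{\mathfrak{M}(\Omega'')}$ to a bound by $\|\mu\|_{\mathfrak{M}(\Omega')}+\|\nu\|_{\mathfrak{M}(\partial\Omega)}$, but $\|\sigma\|_{\mathfrak{M}(\Omega'')}$ contains the mass $|\mu|(\Omega''\setminus\Omega')$, which is not controlled by $|\mu|(\Omega')$, and $\|g\circ u\|_{L^1(\Omega'')}$ is controlled via \eqref{estimate} only by the \emph{global} norm $\|\mu\|_{\mathfrak{M}(\Omega,\rho)}$. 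In fact \eqref{eq27} as literally written cannot hold: take $g\equiv 0$ (which lies in $\mathscr{G}_0$), $\nu=0$ and $\mu=\delta_{y_0}$ with $y_0\in\Omega\setminus\overline{\Omega'}$; then the right-hand side of \eqref{eq27} vanishes while $u=G_L^{\Omega}(\cdot,y_0)>0$ on $\Omega'$, so the left-hand side does not. The conclusion your argument actually delivers (and presumably the intended statement, matching the global form of \eqref{eq26}) is $\|u\|_{W^{1,p}(\Omega')}\le C(p,\Omega')\bigl(\|\mu\|_{\mathfrak{M}(\Omega,\rho)}+\|\nu\|_{\mathfrak{M}(\partial\Omega)}\bigr)$. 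You should either prove that corrected version or note explicitly that the purely local datum $\|\mu\|_{\mathfrak{M}(\Omega')}$ cannot suffice on the right-hand side.
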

The following definitions and propositions are due to Marcus and V\'{e}ron \cite{veron}. 

\begin{definition}\label{exhaustion}
	We say that $\{\Omega_n\}$ is uniformly of class $C^2$ if $\exists~r_0, \gamma_0, n_0$ such that for any $X\in\partial\Omega$:\\
	There exists a system of Cartesian coordinates $\xi$ centered at $X$, a sequence $\{f_n\}\subset C^2(B_{r_0}^{N-1}(0))$ and $f\in C^2(B_{r_0}^{N-1}(0))$ such that the following statement holds. Let $$Q_0:=\{\xi=(\xi_1,\xi')\in\mathbb{R}\times\mathbb{R}^{N-1}:|\xi'|<r_0, |\xi_N|<\gamma_0\}.$$ Then the surfaces $\partial\Omega_n\cap Q_0$, $n>n_0$ and $\partial\Omega\cap Q_0$ can be expressed as $\xi_1=f_n(\xi')$ and $\xi_1=f(\xi')$ respectively and $$f_n\rightarrow f~~\text{in}~C^2(B_{r_0}^{N-1}(0)).$$
\end{definition}

\begin{definition}
	A sequence $\{\Omega_n\}$ is an exhaustion of $\Omega$ if
	${\bar{\Omega}_n}\subset\Omega_{n+1}$ and
	$\Omega_n\uparrow\Omega$. We say that an exhaustion $\Omega_n$
	is of class $C^2$ if each domain $\Omega_n$ is of this class. If, in
	addition, $\Omega$ is a $C^2$ domain and the sequence of domains
	$\{\Omega_n\}$ is uniformly of class $C^2$, we say that $\{\Omega_n\}$
	is a uniform $C^2$ exhaustion.
\end{definition}
  
\begin{definition}\label{trdef}
	Let $u \in W_{\text{loc}}^{1,p}(\Omega)$ for some $p>1$. We say that $u$
	possesses an $M$-boundary trace on $\partial\Omega$ if there exists
	$\nu\in\mathfrak{M}(\partial\Omega)$ such that, for every uniform
	$C^2$ exhaustion $\{\Omega_n\}$ and every $h\in C(\bar{\Omega})$,
	\begin{eqnarray}
	\int_{\partial\Omega_n}u\lfloor_{\partial\Omega_n}hdS\rightarrow\int_{\partial\Omega}hd\nu,\nonumber
	\end{eqnarray}
	where $u\lfloor_{\partial\Omega_n}$ denotes the Sobolev trace,
	$dS=d\mathbb{H}^{N-1}$ and $\mathbb{H}^{N-1}$ denotes the $(N-1)$
	dimensional Hausdorff measure. The $M$-boundary trace $\nu$ of $u$ is denoted by $tr\, u$.
\end{definition}
\begin{remark}
	If $u\in W^{1,p}(\Omega)$ for some $p>1$, then the Sobolev trace = $M$- boundary trace.
\end{remark}
\begin{definition}
	We say that $u\in L^1(\Omega)$ satisfies $-Lu =\mu$ in $\Omega$, in the sense of distribution if it satisfies
	$$-\int_{\Omega} uL^*\varphi = \int_{\Omega}\varphi d\mu$$ for every $\varphi \in C^{\infty, L}_c(\Omega)$, where $C^{\infty, L}_c(\Omega) = \{\varphi \in C_c^\infty(\Omega): L^*\varphi \in L^\infty(\Omega)\}$.
\end{definition}
\begin{proposition}\label{prop}
	Let $\mu \in \mathfrak{M}(\Omega, \rho)$ and $\nu \in \mathfrak{M}(\partial\Omega)$. Then a function $u \in L^1(\Omega)$ is a very weak solution of the problem 
	 \begin{align*}
	 \begin{split}
	 -L u & =\mu\,\,\mbox{in}\,\,\Omega\\
	 u & =\nu \,\,\mbox{on}\,\,\partial\Omega 
	 \end{split}
	 \end{align*}
	 if and only if 
	 \begin{align*}
	 \begin{split}
	 -L u & =\mu\,\,\mbox{in}\,\,\Omega\,\,\,(\text{in the sense of distribution})\\
	 tr\,u & =\nu \,\,\mbox{on}\,\,\partial\Omega \,\,\,( \text{in the sense of Definition \ref{trdef}} )
	 \end{split}
	 \end{align*}
	\end{proposition}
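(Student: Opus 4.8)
The plan is to establish the two implications separately. The forward direction (``very weak solution $\Rightarrow$ distributional solution with $M$-trace $\nu$'') is essentially a rearrangement of Green's identity over an exhausting family of smooth subdomains; the converse is deduced from the uniqueness statement of Lemma~\ref{lem}.

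\emph{Very weak solution $\Rightarrow$ distributional solution with $\mathrm{tr}\,u=\nu$.} Let $u\in L^1(\Omega)$ be a very weak solution of the linear problem. Restricting the defining identity (\ref{ineq3}) (with $g\equiv 0$) to test functions $\varphi\in C^{\infty,L}_c(\Omega)$, which form a subclass of $C^{2,L}_c(\bar\Omega)$ supported away from $\partial\Omega$, the boundary integral vanishes and we obtain $-\int_\Omega u L^{*}\varphi=\int_\Omega\varphi\,d\mu$, i.e. $-Lu=\mu$ in the sense of distributions. Since the coefficients of $L$ are Lipschitz and $\mu\in\mathfrak{M}(\Omega,\rho)$, the interior elliptic estimates underlying Corollary~\ref{cor1} (equivalently the Green-function bounds of V\'eron~\cite{ver}) give $u\in W^{1,p}_{\mathrm{loc}}(\Omega)$ for $p\in[1,\tfrac{N}{N-1})$, so $u$ admits an $M$-boundary trace $\mathrm{tr}\,u\in\mathfrak{M}(\partial\Omega)$ in the sense of Definition~\ref{trdef}. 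To identify it, fix $\varphi\in C^{2,L}_c(\bar\Omega)$ and a uniform $C^2$ exhaustion $\{\Omega_n\}$ of $\Omega$ (for instance $\Omega_n=\{\rho>\beta_n\}$ with $\beta_n\downarrow 0$, which is uniformly of class $C^2$ because $\partial\Omega\in C^2$, so the signed distance is $C^2$ near $\partial\Omega$). Green's formula on $\Omega_n$ yields
\[
\int_{\Omega_n}\varphi\,d\mu=-\int_{\Omega_n}uL^{*}\varphi\,dx+\int_{\partial\Omega_n}\Big(u\,\tfrac{\partial\varphi}{\partial{\bf{n}}_{L^{*}}}-\varphi\,\Psi_n[u]\Big)\,dS,
\]
where $\Psi_n[u]$ collects the co-normal derivative of $u$ following $L$ on $\partial\Omega_n$ together with the zero order boundary contributions generated by $b_j,c_j$. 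Letting $n\to\infty$: $\int_{\Omega_n}\varphi\,d\mu\to\int_\Omega\varphi\,d\mu$ since $|\varphi|\le c\rho$ and $\rho|\mu|$ is finite; $\int_{\Omega_n}uL^{*}\varphi\to\int_\Omega uL^{*}\varphi$ since $u\in L^1(\Omega)$ and $L^{*}\varphi\in L^\infty(\Omega)$; by the uniform $C^2$ convergence $\partial\Omega_n\to\partial\Omega$ the restriction of $\tfrac{\partial\varphi}{\partial{\bf{n}}_{L^{*}}}$ to $\partial\Omega_n$ converges uniformly to its boundary value, and Definition~\ref{trdef} gives $\int_{\partial\Omega_n}u\,\tfrac{\partial\varphi}{\partial{\bf{n}}_{L^{*}}}\,dS\to\int_{\partial\Omega}\tfrac{\partial\varphi}{\partial{\bf{n}}_{L^{*}}}\,d(\mathrm{tr}\,u)$; finally $\int_{\partial\Omega_n}\varphi\,\Psi_n[u]\,dS\to 0$ because $|\varphi|\le c\,\beta_n\to 0$ on $\partial\Omega_n$ while $\int_{\partial\Omega_n}|\Psi_n[u]|\,dS$ stays controlled by the boundary a priori estimates for $W^{1,p}_{\mathrm{loc}}$-solutions of $-Lu=\mu$ (cf. \cite{ver,veron}). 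Comparing the resulting identity with (\ref{ineq3}) and letting $\varphi$ range over $C^{2,L}_c(\bar\Omega)$ — whose co-normal traces $\tfrac{\partial\varphi}{\partial{\bf{n}}_{L^{*}}}$ exhaust a dense subspace of $C(\partial\Omega)$, using $\,{\bf{n}}\cdot{\bf{n}}A^{T}>0$ — forces $\mathrm{tr}\,u=\nu$.

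\emph{Distributional solution with $\mathrm{tr}\,u=\nu$ $\Rightarrow$ very weak solution.} Conversely, let $u\in L^1(\Omega)$ satisfy $-Lu=\mu$ in the sense of distributions and $\mathrm{tr}\,u=\nu$. By Lemma~\ref{lem} there is a unique very weak solution $\tilde u\in L^1(\Omega)$ of $-L\tilde u=\mu$ in $\Omega$, $\tilde u=\nu$ on $\partial\Omega$, and by the implication just proved $\tilde u$ also solves $-L\tilde u=\mu$ distributionally with $\mathrm{tr}\,\tilde u=\nu$. Hence $w:=u-\tilde u\in L^1(\Omega)$ satisfies $-Lw=0$ distributionally and $\mathrm{tr}\,w=0$. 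Interior regularity (Lipschitz coefficients) makes $w$ an $L$-harmonic function in $W^{1,p}_{\mathrm{loc}}(\Omega)$, and such a function is the Poisson integral of its $M$-boundary trace by the representation theory of \cite{ver,veron}; since $\mathrm{tr}\,w=0$ we get $w\equiv 0$, i.e. $u=\tilde u$ is a very weak solution. This establishes the equivalence.

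\emph{Main obstacle.} The one non-routine point is the vanishing of $\int_{\partial\Omega_n}\varphi\,\Psi_n[u]\,dS$ in the forward direction: since $u$ has only the borderline regularity $u\in W^{1,p}_{\mathrm{loc}}(\Omega)$ with $p<\tfrac{N}{N-1}$, the traces of $\nabla u$ on the approximating surfaces $\partial\Omega_n$ need not be bounded, so one must invoke the sharp boundary behaviour of the Green kernel of $L$ (concretely an estimate of the type $\beta\int_{\{\rho=\beta\}}|\nabla u|\,dS=o(1)$ as $\beta\to 0$) from V\'eron's work to close the argument. An alternative route that bypasses this is to combine uniqueness of the very weak solution (Lemma~\ref{lem}) with the Green--Poisson representation $u=\int_\Omega G^{\Omega}_{L}(\cdot,y)\,d\mu(y)+\int_{\partial\Omega}P^{\Omega}_{L}(\cdot,y)\,d\nu(y)$, whose Green-potential part has zero $M$-boundary trace and whose Poisson part has $M$-boundary trace $\nu$.
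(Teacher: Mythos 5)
The paper does not actually prove this proposition: its ``proof'' is the single line ``The proof follows the Proposition 1.3.7, \cite{veron}'', so your attempt is necessarily more detailed than the source it is being compared with. Measured against what such a proof must contain, your primary (Green's-formula-on-$\Omega_n$) route has two genuine gaps. First, the assertion that $u\in W^{1,p}_{\mathrm{loc}}(\Omega)$ ``admits an $M$-boundary trace'' is not justified: Definition \ref{trdef} demands the existence of a single measure for which the limits $\int_{\partial\Omega_n}u\,h\,dS\to\int_{\partial\Omega}h\,d\nu$ hold for \emph{every} $h\in C(\bar\Omega)$ and \emph{every} uniform $C^2$ exhaustion, and interior Sobolev regularity gives no control of $u$ near $\partial\Omega$; generic $L$-harmonic functions lie in $W^{1,p}_{\mathrm{loc}}$ and possess no $M$-boundary trace. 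Existence of $tr\,u$ must come from the representation $u=\int_\Omega G^{\Omega}_{L}(\cdot,y)\,d\mu(y)+\int_{\partial\Omega}P^{\Omega}_{L}(\cdot,y)\,d\nu(y)$, not from local regularity. Second, the vanishing of $\int_{\partial\Omega_n}\varphi\,\Psi_n[u]\,dS$ rests on the unproved estimate $\beta\int_{\{\rho=\beta\}}|\nabla u|\,dS=o(1)$; you flag this honestly, but as written it is an appeal to an estimate you have not located, for a solution with only the borderline regularity $p<\frac{N}{N-1}$.

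The ``alternative route'' you relegate to your closing paragraph is in fact the proof of the cited Proposition 1.3.7 of \cite{veron}, and promoting it to the main argument closes both gaps at once: the Green potential of $\mu\in\mathfrak{M}(\Omega,\rho)$ has $M$-boundary trace $0$, the Poisson potential of $\nu$ has $M$-boundary trace $\nu$ and solves $-Lu=0$ distributionally, and uniqueness of the very weak solution (Lemma \ref{lem}) identifies $u$ with this sum; both directions of the equivalence then read off directly. One further caution about your converse: the step ``$-Lw=0$ distributionally and $tr\,w=0$ imply $w\equiv0$'' is itself the crux of the linear theory, since the Herglotz--Doob representation of an $L$-harmonic function as the Poisson integral of its $M$-boundary trace is established in \cite{veron} for functions already known to possess such a trace (via positivity or the kernel estimates), not for arbitrary signed $L^1$ solutions; citing it is legitimate, but it should be recognised as the load-bearing ingredient rather than a routine regularity remark. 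With the representation formula taken as the backbone, your argument is correct and coincides with the proof the paper delegates to \cite{veron}.
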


	 \begin{proof}
	The proof follows the Proposition 1.3.7, \cite{veron}.
	\end{proof}
The following result is an immediate consequence of the Proposition $\ref{prop}$.
\begin{proposition}
		Let $\mu \in \mathfrak{M}(\Omega, \rho)$ and $\nu \in \mathfrak{M}(\partial\Omega)$. Then a function $u\in L^1(\Omega)$, with $g\circ u\in L^1(\Omega, \rho)$, satisfies $(\ref{ineq3})$ if and only if 
		\begin{align*}
		\begin{split}
		-L u + g\circ u & =\mu\,\,\mbox{in}\,\,\Omega\,\,\,(\text{in the sense of distribution})\\
		tr\,u & =\nu \,\,\mbox{on}\,\,\partial\Omega \,\,\,( \text{in the sense of Definition \ref{trdef}} )
		\end{split}
		\end{align*}
\end{proposition}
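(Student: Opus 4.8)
The plan is to reduce the statement directly to Proposition \ref{prop} by absorbing the nonlinear term $g\circ u$ into the right-hand side measure. First I would observe that since $u\in L^{1}(\Omega)$ and $g\circ u\in L^{1}(\Omega,\rho)$, the function $g\circ u$ — identified with the measure $(g\circ u)\,dx$ — belongs to $\mathfrak{M}(\Omega,\rho)$, and hence the signed measure $\tilde\mu:=\mu-g\circ u$ again lies in $\mathfrak{M}(\Omega,\rho)$. The only point that needs a line of care is the convergence of $\int_{\Omega}(g\circ u)\varphi\,dx$ near $\partial\Omega$ for $\varphi\in C^{2,L}_{c}(\bar\Omega)$: since such $\varphi$ vanishes on $\partial\Omega$ one has $|\varphi|\le C\rho$ on $\Omega$, so $\int_{\Omega}|g\circ u|\,|\varphi|\,dx\le C\,\|g\circ u\|_{L^{1}(\Omega,\rho)}<\infty$, and for $\varphi\in C^{\infty,L}_{c}(\Omega)$ this is even easier since $g\circ u\in L^{1}_{\mathrm{loc}}(\Omega)$. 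With this in hand, for every $\varphi$ in either test space, $\int_{\Omega}\varphi\,d\tilde\mu=\int_{\Omega}\varphi\,d\mu-\int_{\Omega}(g\circ u)\varphi\,dx$.

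Using this identity, equation $(\ref{ineq3})$, namely
\[
\int_{\Omega}\bigl(-uL^{*}\varphi+(g\circ u)\varphi\bigr)\,dx=\int_{\Omega}\varphi\,d\mu-\int_{\partial\Omega}\frac{\partial\varphi}{\partial{\bf{n}}_{L^{*}}}\,d\nu\qquad\forall\,\varphi\in C^{2,L}_{c}(\bar\Omega),
\]
is seen to be literally equivalent to
\[
\int_{\Omega}(-uL^{*}\varphi)\,dx=\int_{\Omega}\varphi\,d\tilde\mu-\int_{\partial\Omega}\frac{\partial\varphi}{\partial{\bf{n}}_{L^{*}}}\,d\nu\qquad\forall\,\varphi\in C^{2,L}_{c}(\bar\Omega);
\]
that is, $u$ is a very weak solution of the linear problem $-Lu=\tilde\mu$ in $\Omega$, $u=\nu$ on $\partial\Omega$. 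I would then apply Proposition \ref{prop} with the datum $(\tilde\mu,\nu)\in\mathfrak{M}(\Omega,\rho)\times\mathfrak{M}(\partial\Omega)$: it asserts exactly that this very weak formulation holds if and only if $-Lu=\tilde\mu$ in $\Omega$ in the sense of distributions and $tr\,u=\nu$ on $\partial\Omega$ in the sense of Definition \ref{trdef}.

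It then only remains to unwind the distributional identity. For every $\varphi\in C^{\infty,L}_{c}(\Omega)$ the equality $-\int_{\Omega}uL^{*}\varphi\,dx=\int_{\Omega}\varphi\,d\tilde\mu=\int_{\Omega}\varphi\,d\mu-\int_{\Omega}(g\circ u)\varphi\,dx$ is the same as $-\int_{\Omega}uL^{*}\varphi\,dx+\int_{\Omega}(g\circ u)\varphi\,dx=\int_{\Omega}\varphi\,d\mu$, i.e. $-Lu+g\circ u=\mu$ in the sense of distributions; combined with the boundary trace condition $tr\,u=\nu$ coming from Proposition \ref{prop}, this gives both implications and completes the proof. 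I do not expect any genuine obstacle here — the argument is essentially bookkeeping — the single substantive point being the integrability of $(g\circ u)\varphi$ up to the boundary, handled as above by $g\circ u\in L^{1}(\Omega,\rho)$ together with $|\varphi|\le C\rho$ for $\varphi\in C^{2,L}_{c}(\bar\Omega)$.
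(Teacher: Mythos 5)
Your argument is correct and is exactly the reduction the paper intends: the paper offers no details beyond declaring the proposition ``an immediate consequence of Proposition \ref{prop}'', and your proof spells out precisely that reduction, absorbing $g\circ u$ into the datum as $\tilde\mu=\mu-g\circ u\in\mathfrak{M}(\Omega,\rho)$ and applying Proposition \ref{prop} to $(\tilde\mu,\nu)$. The integrability check via $|\varphi|\le C\rho$ and $g\circ u\in L^1(\Omega,\rho)$ is the right (and only) point needing care.
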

\noindent We prove the following crucial lemma.
\begin{lemma}{\label{lem1}}
Let $\rho\mu_n \xrightharpoonup[\bar{\Omega}]{} \tau.$ Then
$$\displaystyle{\lim_{n\rightarrow \infty} \int_{\Omega}\varphi d
\mu_n=\int_{\Omega} \varphi
d\mu_{int}-\int_{\partial\Omega}\frac{\partial \varphi}{\partial
{\bf{n}}_{L^{*}}}\frac{1}{{\bf{n}}\cdot {\bf{n}}A^T} d\tau}$$ for all $\varphi\in C_c^{2,L}(\bar{\Omega})$, where ${\bf{n}}$ is the outward normal unit vector to $\partial\Omega$ and $A=(a_{ij})_{N\times N}$.
\end{lemma}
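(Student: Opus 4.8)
The plan is to show that the limit $\lim_n \int_\Omega \varphi\, d\mu_n$ splits into an interior contribution, governed by $\mu_{int} = \frac{\tau}{\rho}\chi_\Omega$ as in Remark \ref{rk1}, and a boundary contribution coming from the concentration of $\rho\mu_n$ on $\partial\Omega$. The natural device is to rewrite $\int_\Omega \varphi\, d\mu_n = \int_\Omega \frac{\varphi}{\rho}\,\rho\, d\mu_n$ and interpret the right side as integration of the function $\varphi/\rho$ against the measure $(\mu_n)_\rho$ extended by zero to $\bar\Omega$ (Definition \ref{df1}). Since $\rho\mu_n \xrightharpoonup[\bar\Omega]{} \tau$, if $\varphi/\rho$ admitted a continuous extension $\Phi$ to $\bar\Omega$ we would immediately get $\int_\Omega \varphi\, d\mu_n \to \int_{\bar\Omega} \Phi\, d\tau$, and then splitting $\tau = \tau\chi_{\partial\Omega} + \rho\mu_{int}$ from Remark \ref{rk1} would give $\int_\Omega \Phi\,\rho\, d\mu_{int} + \int_{\partial\Omega}\Phi\, d\tau = \int_\Omega \varphi\, d\mu_{int} + \int_{\partial\Omega}\Phi\, d\tau$.

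The crux is therefore to identify the boundary value of $\varphi/\rho$ for $\varphi\in C^{2,L}_c(\bar\Omega)$. Since $\varphi = 0$ on $\partial\Omega$ and $\varphi\in C^2(\bar\Omega)$, a first-order Taylor expansion in the normal direction gives, for $x\in\partial\Omega$, that $\varphi/\rho$ extends continuously with boundary value $-\partial\varphi/\partial{\bf n} = -\nabla\varphi\cdot{\bf n}$ (the sign because $\rho$ increases into $\Omega$). To match the statement of the lemma I need to rewrite this in terms of the co-normal derivative $\frac{\partial\varphi}{\partial{\bf n}_{L^*}} = \nabla\varphi \cdot {\bf n}A^T$ from \eqref{conormal}: using $\nabla\varphi\cdot{\bf n} = \frac{(\nabla\varphi\cdot{\bf n}A^T)}{({\bf n}\cdot{\bf n}A^T)}\cdot(\text{correction})$ — more precisely, near $\partial\Omega$ one writes $\nabla\varphi = (\nabla\varphi\cdot{\bf n}){\bf n} + (\text{tangential part})$, and because $\varphi$ vanishes on $\partial\Omega$ the tangential part of $\nabla\varphi$ vanishes there, so on $\partial\Omega$ we have $\frac{\partial\varphi}{\partial{\bf n}_{L^*}} = (\nabla\varphi\cdot{\bf n})({\bf n}\cdot{\bf n}A^T)$, whence $\nabla\varphi\cdot{\bf n} = \frac{1}{{\bf n}\cdot{\bf n}A^T}\frac{\partial\varphi}{\partial{\bf n}_{L^*}}$. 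Thus the continuous extension of $\varphi/\rho$ to $\partial\Omega$ equals $-\frac{1}{{\bf n}\cdot{\bf n}A^T}\frac{\partial\varphi}{\partial{\bf n}_{L^*}}$, which is exactly the boundary integrand appearing in the claim. This computation is essentially the content of Corollary \ref{lem1} (the $\varphi_0/\rho$ identity invoked in the proof of Lemma \ref{lem3}), applied here to a general $\varphi$.

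I would carry out the steps in this order: (1) record that $\Phi := \varphi/\rho$ has a continuous extension to $\bar\Omega$ with $\Phi|_{\partial\Omega} = -\frac{1}{{\bf n}\cdot{\bf n}A^T}\frac{\partial\varphi}{\partial{\bf n}_{L^*}}$, citing the co-normal bookkeeping above and the uniform ellipticity \eqref{ellip} which makes ${\bf n}\cdot{\bf n}A^T > 0$ so the quotient is well defined; (2) apply the weak convergence $\rho\mu_n\xrightharpoonup[\bar\Omega]{}\tau$ in Definition \ref{df1} with test function $\Phi\in C(\bar\Omega)$ to get $\int_\Omega\varphi\,d\mu_n = \int_{\bar\Omega}\Phi\,d(\mu_n)_\rho \to \int_{\bar\Omega}\Phi\,d\tau$; (3) decompose $\tau = \tau\chi_{\partial\Omega} + \rho\mu_{int}$ via Remark \ref{rk1}, so that $\int_{\bar\Omega}\Phi\,d\tau = \int_\Omega \Phi\,\rho\,d\mu_{int} + \int_{\partial\Omega}\Phi\,d\tau = \int_\Omega\varphi\,d\mu_{int} - \int_{\partial\Omega}\frac{1}{{\bf n}\cdot{\bf n}A^T}\frac{\partial\varphi}{\partial{\bf n}_{L^*}}\,d\tau$. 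I expect step (1) — the careful verification that $\varphi/\rho$ extends continuously with the stated boundary value, including that the tangential gradient drops out because $\varphi\equiv 0$ on $\partial\Omega$ — to be the main technical obstacle; it requires the $C^2$ regularity of $\partial\Omega$ and of $\varphi$, and a local flattening of the boundary to justify the normal Taylor expansion uniformly. Everything after that is a routine passage to the limit plus the bookkeeping of Remark \ref{rk1}.
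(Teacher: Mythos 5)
Your proposal is correct and follows essentially the same route as the paper: both arguments hinge on extending $\varphi/\rho$ continuously to $\bar{\Omega}$ with boundary value $-\frac{1}{{\bf{n}}\cdot {\bf{n}}A^T}\frac{\partial \varphi}{\partial {\bf{n}}_{L^{*}}}$ (the paper computes the directional limit along $v={\bf{n}}A^T$ using $\nabla\varphi=c\,{\bf{n}}$ and $\nabla\rho=-{\bf{n}}$ on $\partial\Omega$, which is the same bookkeeping as your normal-Taylor-expansion-plus-conversion), then testing the weak convergence $\rho\mu_n \xrightharpoonup[\bar{\Omega}]{} \tau$ against this extension and splitting $\tau=\tau\chi_{\partial\Omega}+\rho\mu_{int}$ via Remark \ref{rk1}.
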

\begin{proof}
Consider $\varphi \in  C_c^{2,L}(\bar{\Omega})$. Since $\varphi$ vanishes on $\partial\Omega$, so for $x_0\in \partial\Omega$, $\nabla\varphi(x_0)$  is normal to $\partial\Omega$, that is 
$$\nabla \varphi(x_0)= c\,{\bf{n}},~~~~\text{where}~~c:=\frac{\partial\varphi}{\partial {\bf{n}}}(x_0).$$
As $\rho(x)= dist(x, \partial\Omega)$, hence $\nabla\rho(x_0)=-{\bf{n}}$. Thus for given any direction $v$, we have 
$$\lim_{t\rightarrow 0+}\frac{\varphi(x_0-tv)}{\rho(x_0-tv)}= \frac{\nabla\varphi(x_0)\cdot v}{\nabla \rho(x_0)\cdot v} =\frac{\nabla\varphi(x_0)\cdot v}{-{\bf{n}}\cdot v} = -c=-\frac{\partial\varphi}{\partial {\bf{n}}}(x_0).$$
In particular, taking $v={\bf{n}}A^T(x_0)$ in the above we get,
$$\lim_{t\rightarrow 0+}\frac{\varphi(x_0-tv)}{\rho(x_0-tv)} =\frac{\nabla\varphi(x_0)\cdot {\bf{n}}A^T(x_0)}{\nabla \rho(x_0)\cdot {\bf{n}}A^(x_0)} =\frac{\nabla\varphi(x_0)\cdot {\bf{n}}A^T(x_0)}{-{\bf{n}}\cdot {\bf{n}}A^T(x_0)}=-\frac{\partial \varphi}{\partial {\bf{n}}_{L^{*}}}(x_0)\frac{1}{{\bf{n}}\cdot {\bf{n}}A^T(x_0)}.$$
Now take $\bar{\varphi}(x)=\begin{cases}
\frac{\varphi}{\rho}(x)\,; & x\in \Omega\,,\\
-\frac{\partial \varphi}{\partial {\bf{n}}_{L^{*}}}\frac{1}{{\bf{n}}\cdot {\bf{n}}A^T}\,; & x\in
\partial\Omega
\end{cases} $\\
Then $\bar{\varphi}\in C(\bar{\Omega})$ and using remark
$\ref{rk1}$, we have,
\begin{eqnarray*}
\lim_{n\rightarrow \infty} \int_{\Omega}\varphi d \mu_n &=& \lim_{n\rightarrow \infty} \int_{\Omega}\rho \bar{\varphi} d \mu_n\\
&=&\int_{\bar{\Omega}}\bar{\varphi}d\tau\,\,\,\,\,\,\,\,(\text{since}~ \rho\mu_n \xrightharpoonup[\bar{\Omega}]{} \tau)\\
&=&\int_{\bar{\Omega}}\bar{\varphi}\chi_{\partial\Omega} d\tau+\int_{\bar{\Omega}}\rho\bar{\varphi}d\mu_{int} \,\,\,\,\,\,\,\,
(\text{since}~ \tau=\rho\mu_{int}+\tau\chi_{\partial\Omega})\\
&=&\int_{\partial\Omega}\bar{\varphi} d\tau+\int_{\Omega}\rho \bar{\varphi} d\mu_{int}\\
&=&\int_{\Omega}\varphi
d\mu_{int}-\int_{\partial\Omega}\frac{\partial \varphi}{\partial
n_{L^{*}}}\frac{1}{{\bf{n}}\cdot {\bf{n}}A^T} d\tau
\end{eqnarray*}
\end{proof}
\begin{lemma}
	Assume that the given conditions in the Theorem $\ref{thm3}$ are holds. Then there exists a subsequence $\{u_{n_k}\}$ of $\{u_n\}$ that converges
	in $L^1(\Omega)$.
\end{lemma}
\begin{proof}
	By the given condition we have $\|\mu_n\|_{\mathfrak{M}(\Omega, \rho)}+\|\nu_n
	\|_{\mathfrak{M}(\partial \Omega)}\leq c\,,\,\,\forall\, n\in \mathbb{N}$, for some $c>0$.
	Therefore, by ($\ref{eq26}$),  $\{u_n\}$ is bounded in ${L^p(\Omega)}$
	for every $p\in \big[1, \frac{N}{N-1}\big)$. This implies that
	$\{u_n\}$ is uniformly integrable in $L^p(\Omega)$, for each such
	$p$. By Vitali's convergence theorem there exists a subsequence $\{u_{n_k}\}$ such that $u_{n_k}\rightarrow u$ in $L^1(\Omega)$, for some $u\in L^1(\Omega)$.
\end{proof}
\begin{proof}[\textbf{Proof of the Theorem $\ref{thm3}$}]
By our assumption, $\{\mu_n\}$ is bounded in $\mathfrak{M}(\Omega,
\rho)$ and $\{\nu_n\}$ is bounded in $\mathfrak{M}(\partial
\Omega)$. Using ($\ref{estimate}$), we have $\{g\circ u_n\}$ is bounded in
$L^1(\Omega, \rho)$ and hence $\{\rho (g\circ u_n)\}$ is also
bounded $L^1(\Omega)$. Therefore, there exists a subsequence of $\{\rho(g\circ
u_n)\}$ still denoted by $\{\rho(g\circ u_n)\}$ converges weakly in
$\bar{\Omega}$. Thus
$$\rho\,g\circ u_n
\xrightharpoonup[\bar{\Omega}]{}\lambda ~\text{(say)}.$$ Take $\lambda_{int}
=\frac{\lambda}{\rho}\chi_{\Omega}$ and $\lambda_{bd} =
\lambda\chi_{\partial\Omega}$. Then by the lemma $\ref{lem1}$,
\begin{eqnarray}
\lim_{n\rightarrow \infty} \int_\Omega (g \circ u_n) \varphi dx =
\int_\Omega \varphi d\lambda_{int} - \int_{\partial\Omega}
\frac{\partial \varphi}{\partial {\bf{n}}_{L^{*}}}\frac{1}{{\bf{n}}\cdot {\bf{n}} A^T} d\lambda \label{eq40}
\end{eqnarray} and since $\rho\mu _n \xrightharpoonup[\bar{\Omega}]{} \tau$ in
$\mathfrak{M}(\bar{\Omega})$,
\begin{eqnarray}
\displaystyle{\lim_{n\rightarrow \infty} \int_{\Omega}\varphi d
\mu_n=\int_{\Omega} \varphi
d\mu_{int}-\int_{\partial\Omega}\frac{\partial \varphi}{\partial
{\bf{n}}_{L^{*}}}\frac{1}{{\bf{n}}\cdot {\bf{n}} A^T} d\tau}\label{eq41}
\end{eqnarray}
for all $\varphi \in C^{2,L}_c(\bar{\Omega})$. Since $u_n$ is a weak
solution of $(\ref{ineq1})$, we have,
\begin{eqnarray*}
\int_\Omega (-u_n L^{*} \varphi + (g\circ u_n)\varphi)dx =
\int_\Omega \varphi d\mu_n - \int_{\partial\Omega}\frac{\partial
\varphi}{\partial {\bf{n}}_{L^{*}}} d\nu_n
\end{eqnarray*}
for every $\varphi \in C^{2,L}_c(\bar{\Omega})$. Taking the limit $n
\rightarrow \infty$ and using $(\ref{eq40})$ and $(\ref{eq41})$, we
have
\begin{align*}
-\int_\Omega u L^*\varphi dx + \int_\Omega \varphi d\lambda_{int} -
\int_{\partial\Omega} \frac{\partial \varphi}{\partial {\bf{n}}_{L^{*}}}
\frac{1}{{\bf{n}}\cdot {\bf{n}} A^T}d\lambda_{bd} = \int_{\Omega} \varphi d\mu_{int} &
-\int_{\partial\Omega}\frac{\partial
\varphi}{\partial {\bf{n}}_{L^{*}}}\frac{1}{{\bf{n}}\cdot {\bf{n}} A^T} d\tau_{bd}\\
& - \int_{\partial\Omega}\frac{\partial \varphi}{\partial
{\bf{n}}_{L^{*}}}d\nu
\end{align*} for every $\varphi \in C^{2,L}_c(\bar{\Omega})$. The above
equation can  also be expressed as
\begin{align*}
-\int_\Omega u L^*\varphi dx + \int_\Omega (g\circ u) \varphi dx =
\int_\Omega & (g\circ u) \varphi dx  - \int_{\Omega} \varphi
d(\lambda_{int} -\mu_{int})\\
& + \int_{\partial\Omega} \frac{\partial \varphi}{\partial
{\bf{n}}_{L^{*}}}\frac{1}{{\bf{n}}\cdot {\bf{n}} A^T} d(\lambda_{bd}-\tau_{bd})-
\int_{\partial\Omega}\frac{\partial \varphi}{\partial {\bf{n}}_{L^{*}}}d\nu
\end{align*}
for every $\varphi \in C_c^{2,L}(\bar{\Omega})$. This shows that $u$
is a weak solution of $(\ref{eq39})$, where
\begin{align}
\mu^{\#} & = g\circ u - (\lambda_{int} -\mu_{int})\,, \label{eq42}\\
\nu^{\#} & = \nu - \frac{(\lambda_{bd} - \tau_{bd})}{{\bf{n}}\cdot {\bf{n}}A^T}. \label{eq43}
\end{align}
Further, if $\mu_n, \nu_n\geq 0$ then by comparison of solutions
$u_n\geq 0$. Hence $\rho\,g\circ u_n\geq 0$ and in this case
$\lambda \geq 0$ and $\nu^{\#}\geq 0$. Also by uniformly ellipticity condition ($\ref{ellip}$), ${\bf{n}} \cdot {\bf{n}}A^T>0$. Hence by $(\ref{eq43})$, we
obtain $\displaystyle{\nu^{\#}\leq \nu +\frac{\tau_{bd}}{{\bf{n}}\cdot {\bf{n}} A^T}}$.
\end{proof}
\begin{remark}
 The Theorem $\ref{thm3}$ in this paper, is a generalization of the Theorem 4.1 of Bhakta and Marcus \cite{bhakta}, in which the case $L=-\Delta$ has been considered. In fact by putting $A=I$ in ($\ref{eq42}$) and ($\ref{eq43}$), we have the corresponding reduced limit   
 \begin{align*}
 \mu^{\#} & = g\circ u - (\lambda_{int} -\mu_{int})\,,\\
 \nu^{\#} & = \nu - (\lambda_{bd} - \tau_{bd}).
 \end{align*} 
 One more important thing is that the reduced limit of the boundary value problem depends on the matrix $A_{N\times N}$ corresponding to the principle part of the elliptic operator $L$.
\end{remark}
We now investigate the relation between the reduced limit and weak
limit which is given in terms of  the following theorem.
\begin{theorem}{\label{thm5}}
In addition to the assumptions of Theorem $\ref{thm3}$, assume that the uniform ellipticity condition ($\ref{ellip}$) holds with $\alpha \geq 1$ and also assume that the nonlinear function $g$-satisfies
\begin{eqnarray}
\lim_{a,t\rightarrow\infty}\frac{g(x,at)}{ag(x,t)}=\infty
\label{geq}
\end{eqnarray}
uniformly with respect to $x\in \Omega$. Let $v_n$ be the very weak solution
of
\begin{align}
\begin{split}
-L v_n & = \mu_n\,\,\,\mbox{in}\,\,\Omega,\\
v_n & = \nu_n \,\,\mbox{on}\,\,\partial\Omega \label{eq44}
\end{split}
\end{align}
If $\mu_n, \nu_n \geq 0$ and $\{g\circ v_n\}$ is bounded in
$L^1(\Omega; \rho)$ then $\nu^{\#}$ (reduced limit of $\{\nu_n\}$)
and $\displaystyle{\nu^{\#}\leq \nu +\frac{\tau_{bd}}{{\bf{n}}\cdot {\bf{n}} A^T}}$ are mutually absolutely continuous.
\end{theorem}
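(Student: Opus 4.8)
The plan is to establish the two one-sided absolute-continuity relations separately. Write $\sigma$ for the boundary function ${\bf{n}}\cdot{\bf{n}}A^{T}$ and set $\theta:=\nu+\frac{\tau_{bd}}{\sigma}$, the weak-limit boundary measure appearing in Theorem~\ref{thm3}. From the proof of Theorem~\ref{thm3} we may pass to a subsequence along which $\rho(g\circ u_n)\xrightharpoonup[\bar{\Omega}]{}\lambda$ in $\mathfrak{M}(\bar{\Omega})$; writing $\lambda_{bd}=\lambda\chi_{\partial\Omega}$ and recalling $(\ref{eq43})$ one has $\lambda_{bd}=\sigma(\theta-\nu^{\#})\ge 0$ and $0\le\nu^{\#}\le\theta$. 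The last inequality already gives $\nu^{\#}\ll\theta$, so the substantive claim is the reverse relation $\theta\ll\nu^{\#}$. Here the hypothesis $\alpha\ge 1$ is used: since $|{\bf{n}}|=1$, the uniform ellipticity condition $(\ref{ellip})$ forces $\sigma\ge\alpha\ge 1$, while Lipschitz continuity of the $a_{ij}$ gives $\sigma$ bounded above. Hence, multiplying the measure identity $\theta=\nu^{\#}+\lambda_{bd}/\sigma$ by $\sigma$, for any Borel set $A\subset\partial\Omega$ with $\nu^{\#}(A)=0$ we get $\theta(A)\le\int_{A}\sigma\,d\theta=\int_{A}\sigma\,d\nu^{\#}+\lambda_{bd}(A)=\lambda_{bd}(A)$. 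Thus it suffices to prove $\lambda_{bd}\ll\nu^{\#}$.

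To locate the ``lost mass'' $\lambda_{bd}$, I would introduce $w_n:=v_n-u_n$. Since $\mu_n,\nu_n\ge 0$, comparison of solutions (Theorem~\ref{thm6}) gives $u_n,v_n\ge 0$, and from $-Lu_n=\mu_n-g\circ u_n\le\mu_n=-Lv_n$ with the same boundary datum $\nu_n$ the same comparison gives $u_n\le v_n$, so $w_n\ge 0$; moreover $-Lw_n=g\circ u_n\ge 0$ in $\Omega$ and $w_n=0$ on $\partial\Omega$. Using the $L^{p}$-bounds for the linear problem $(\cite{ver})$ and the boundedness of $\{g\circ v_n\}$ in $L^{1}(\Omega,\rho)$, pass to a further subsequence so that $v_n\to v$, $u_n\to u$, $w_n\to w:=v-u$ in $L^{1}(\Omega)$ and a.e. Passing to the limit in the linear equation for $v_n$ by Lemma~\ref{lem1} identifies $v$ as the very weak solution of $-Lv=\mu_{int}$ with $\mbox{tr}\,v=\theta$, and the same passage for $w_n$ shows that $w$ is the very weak solution of $-Lw=g\circ u$ with $\mbox{tr}\,w=\lambda_{bd}/\sigma=\theta-\nu^{\#}$. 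By Fatou's lemma $g\circ v\in L^{1}(\Omega,\rho)$ (here $v\ge 0$), so $v$ is a nonnegative supersolution of $-Lz+g\circ z=0$, $z=\theta$ on $\partial\Omega$, while $0$ is a subsolution; the sub/supersolution scheme behind Theorem~\ref{thm1} then yields a solution with boundary trace $\theta$. Consequently $\theta$, and hence every nonnegative measure $\le\theta$, is a good boundary measure.

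It remains to prove $\lambda_{bd}\ll\nu^{\#}$, and this is where the growth restriction $(\ref{geq})$ enters decisively. Suppose, for contradiction, that $\nu^{\#}(A)=0$ while $\lambda_{bd}(A)>0$ for some Borel $A\subset\partial\Omega$; then $\theta(A)>0$, and since $\nu^{\#}(A)=0$ the measure $\theta\chi_{A}$ agrees on $A$ with $(\theta-\nu^{\#})\chi_{A}\le\theta-\nu^{\#}$ and is therefore a good boundary measure. For a shrinking tubular neighbourhood $T_{\varepsilon}$ of $A$ let $\tilde u_{n,\varepsilon}$ solve $(\ref{ineq1})$ with data obtained by restricting $(\mu_n,\nu_n)$ to $T_{\varepsilon}$; by comparison $0\le\tilde u_{n,\varepsilon}\le u_n$, and as $n\to\infty$ and then $\varepsilon\to 0$ these data converge weakly to a datum generating the good measure $\theta\chi_{A}$. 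The role of $(\ref{geq})$ is to guarantee that this limiting procedure cannot annihilate a good boundary datum: the $M$-boundary trace of the limit of the $\tilde u_{n,\varepsilon}$ must still charge $A$, so that $u=\lim u_n$ dominates this limit and hence $\nu^{\#}(A)=\mbox{tr}\,u(A)>0$, a contradiction. Combining $\theta\ll\nu^{\#}$ with $\nu^{\#}\ll\theta$ yields the asserted mutual absolute continuity.

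I expect the main obstacle to be exactly this last step: converting the mere $L^{1}(\Omega,\rho)$-boundedness of $\{g\circ v_n\}$, via the super-linearity $(\ref{geq})$, into enough equi-integrability (equivalently, capacitary control) near $\partial\Omega$ to prevent the collapse of a good boundary datum in the weak limit --- it is precisely the absence of a cleaner quantitative version here that yields mutual absolute continuity rather than equality of $\nu^{\#}$ and $\theta$. This forces one to re-run the Bhakta--Marcus argument with the adjoint operator $L^{*}$, the co-normal derivative $\partial\varphi/\partial{\bf{n}}_{L^{*}}$ and the weight $\sigma={\bf{n}}\cdot{\bf{n}}A^{T}$ replacing the Euclidean normal derivative, which is where the non-self-adjointness of $L$ --- and the need for $\alpha\ge 1$ to keep $\sigma\ge 1$ --- genuinely matters.
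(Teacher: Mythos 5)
Your setup is sound: the direction $\nu^{\#}\ll\nu+\tau_{bd}/({\bf{n}}\cdot{\bf{n}}A^T)$ from $0\le\nu^{\#}\le\nu+\tau_{bd}/({\bf{n}}\cdot{\bf{n}}A^T)$, the reduction of the converse to controlling the boundary mass $\lambda_{bd}$ using ${\bf{n}}\cdot{\bf{n}}A^T\ge\alpha\ge 1$, and the identification $\lambda_{bd}/({\bf{n}}\cdot{\bf{n}}A^T)=\theta-\nu^{\#}$ all match the paper. But the proof has a genuine gap exactly where you flag it: your final step asserts that the hypothesis (\ref{geq}) ``guarantees that this limiting procedure cannot annihilate a good boundary datum,'' and that the trace of the limit of the localized solutions $\tilde u_{n,\varepsilon}$ ``must still charge $A$.'' That assertion is the entire content of the theorem --- annihilation of boundary mass in the weak limit is precisely the phenomenon that makes $\nu^{\#}$ differ from $\nu$ in the first place --- so the contradiction argument as written begs the question. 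No quantitative use of (\ref{geq}) is ever made, and the tubular-neighbourhood localization is not developed to the point where one could see how superlinearity would enter.

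The paper closes this gap with a scaling argument that is absent from your proposal. One takes $\alpha_k\downarrow 0$, lets $w_{n,k}$ solve the problem with data $(\alpha_k\mu_n,\alpha_k\nu_n)$, and shows by comparison that $0\le w_{n,k}\le\alpha_k v_n$ and $w_{n,k}\le u_n$. The first inequality, combined with the formula $\nu_k^{\#}=\alpha_k\nu-(\lambda_k-\alpha_k\tau)\chi_{\partial\Omega}/({\bf{n}}\cdot{\bf{n}}A^T)$ for the reduced limit of the scaled data and with $\alpha\ge 1$, yields $\alpha_k\bigl(\nu+\tau\chi_{\partial\Omega}/({\bf{n}}\cdot{\bf{n}}A^T)\bigr)\le\sigma_k\chi_{\partial\Omega}+\nu^{\#}$, where $\sigma_k$ is the weak limit of $\rho\,g\circ(\alpha_k v_n)$; the second gives $\nu_k^{\#}\le\nu^{\#}$. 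The hypothesis (\ref{geq}) is then used quantitatively: splitting $\rho\,g\circ(\alpha_k v_n)$ on $[\alpha_k v_n<t_0]$ (uniformly bounded, hence no boundary mass) and $[\alpha_k v_n\ge t_0]$ (where $g(x,\alpha_k v_n)\le\epsilon\alpha_k\,g(x,v_n)$ by (\ref{geq}) with $a=1/\alpha_k$), the boundedness of $\{g\circ v_n\}$ in $L^1(\Omega,\rho)$ gives $\|\sigma_k\chi_{\partial\Omega}\|_{\mathfrak{M}(\partial\Omega)}=o(\alpha_k)$. Dividing the displayed inequality by $\alpha_k$ on a Borel set $E$ with $\nu^{\#}(E)=0$ then forces $\nu(E)+\tau(E)/({\bf{n}}\cdot{\bf{n}}A^T)=0$. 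Without this (or an equivalent) mechanism converting superlinearity into the estimate $\|\sigma_k\chi_{\partial\Omega}\|=o(\alpha_k)$, your argument does not go through.
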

\begin{proof}
Since $\mu_n, \nu_n \geq 0$, hence by the theorem $\ref{thm3}$,
$0\leq \displaystyle{\nu^{\#}\leq \nu +\frac{\tau_{bd}}{{\bf{n}}\cdot {\bf{n}} A^T}}$. Therefore, $\nu^{\#}$ is
absolutely continuous with respect to $\displaystyle{\nu +\frac{\tau_{bd}}{{\bf{n}}\cdot {\bf{n}} A^T}}$. Thus we only
need to show $\displaystyle{\nu +\frac{\tau_{bd}}{{\bf{n}}\cdot {\bf{n}} A^T}}$ is absolutely continuous with respect
to $\nu^{\#}$.\\
Let $\alpha\in(0,1]$. Then we have $0\leq g\circ(\alpha v_n)\leq
g\circ v_n.$ By our assumption $\{g\circ v_n\}$ is bounded in
$L^1(\Omega; \rho)$. Hence there exists $c_0>0$ such that $$||
g\circ(\alpha v_n)||_{L^1(\Omega,\rho)}\leq c_0;\,\,\forall n\geq
1,\,\forall \alpha\in(0,1).$$ Let $\{\alpha_k\}$ be a sequence in
$(0, 1)$ such that $\alpha_k\downarrow 0.$ Then one can extract a
subsequence of $\{\rho \,g\circ (\alpha_k v_n)\}$ such that there
exists a measure $\sigma_k\in \mathfrak{M}(\bar{\Omega})$ such that
$$\rho g\circ (\alpha_k v_n)
\xrightharpoonup[\bar{\Omega}]{}\sigma_k$$ for each $k.$ Let
$w_{n,k}$ be the very weak solution of the problem
\begin{align}
\begin{split}
 -L w+g\circ w & =\alpha_k \mu_n \,\,\mbox{in}\,\,\Omega, \\
w & = \alpha_k \nu_n\,\,\mbox{on}\,\,\partial\Omega.\label{eq45}
\end{split}
\end{align}
We will denote $w_n$ to be the very weak solution of
\begin{align}
\begin{split}
 -L w+g\circ w & = \mu_n \,\,\mbox{in}\,\,\Omega, \\
w & = \nu_n\,\,\mbox{on}\,\,\partial\Omega.\label{eq46}
\end{split}
\end{align}
Since $ \alpha_k \mu_n \leq \mu_n$ and $\alpha_k \nu_n \leq \nu_n$,
hence by comparison of solutions when applied to ($\ref{eq45}$) and
($\ref{eq46}$), we have, $w_{n, k}\leq w_n$. Now observe that $g\circ
\alpha_k v_n \geq 0$. Since $v_n$ is a solution of ($\ref{eq44}$), we
have
\begin{align*}
-\int_\Omega \alpha_k v_n L^{*} \varphi dx + \int_\Omega(g\circ
\alpha_k v_n) \varphi dx & \geq -\int_\Omega v_n L^{*} \varphi dx
+ \int_\Omega(g\circ \alpha_k v_n) \varphi dx\\
& =  \int_\Omega \varphi d\mu_n - \int_{\partial\Omega}
\frac{\partial \varphi}{\partial {\bf{n}}_{L^{*}}} d\nu_n +
\int_\Omega(g\circ \alpha_k v_n) \varphi dx\\
& \geq \int_\Omega \varphi d\mu_n - \int_{\partial\Omega}
\frac{\partial \varphi}{\partial {\bf{n}}_{L^{*}}} d\nu_n
\end{align*}
for every $\varphi \geq 0 \in C^{2,L}_c(\bar{\Omega})$. This shows
that $\alpha_k v_n$ is a super solution of the problem ($\ref{eq46}$)
and hence $w_n \leq \alpha_k v_n$. Since $w_{n, k}\leq w_n$, we
obtain,
$$0\leq w_{n,k}\leq \alpha_k v_n.$$
As $\alpha_k v_n \leq v_n$ and $\{v_n\}$ is bounded in
$L^1(\Omega)$, hence there exists a subsequence of $\{w_{n, k}\}$
which converges in $L^1(\Omega)$, for each $k\in \mathbb{N}$. The
subsequence is still denoted by $\{w_{n, k}\}$. By the previous
theorem, $\{\rho (g\circ w_{n,k})\}$ converges weakly in
$\bar{\Omega}$ for each $k$; we denote its limit by $\lambda_k$. Let
$(\mu_k^{\#}, \nu_k ^{\#})$ be the reduced limit of $\{\alpha_k
\mu_n, \alpha_k \nu_n\}$. Again by the previous theorem, $$\displaystyle{\nu_k^{\#}
= \alpha_k \nu -\frac{(\lambda_k -\alpha_k \tau)}{{\bf{n}}\cdot {\bf{n}} A^T} \chi_{\partial\Omega}}.$$
As $w_{n, k} \leq \alpha_k v_n$ , hence
$$\rho(g\circ \alpha_k v_n) - \rho(g\circ
w_{n,k}) \xrightharpoonup[\bar{\Omega}]{}\sigma_k -\lambda_k \geq
0.$$ Now by our assumption, since the uniformly ellipticity condition ($\ref{ellip}$) holds with $\alpha\geq 1$, hence we have $\displaystyle{\sigma_k - \frac{\lambda_k}{{\bf{n}}\cdot {\bf{n}}A^T} \geq \sigma_k - \lambda_k \geq 0}$ in $\bar{\Omega}$. Thus we obtain,
\begin{eqnarray}
(\sigma_k - \frac{\lambda_k}{{\bf{n}}\cdot {\bf{n}}A^T})\chi_{\partial\Omega} = \sigma_k
\chi_{\partial\Omega} + \nu_k^{\#} - \alpha_k (\nu + \frac{\tau}{{\bf{n}} \cdot {\bf{n}}A^T}
\chi_{\partial\Omega}) \geq 0. \label{eq47}
\end{eqnarray}
Let $u_n$ be the solution of $(\ref{ineq1})$ corresponding to
$\mu=\mu_n$, $\nu=\nu_n$. By the comparison of solutions we have
$w_{n,k} \leq u_n$ for all $k, n\in \mathbb{N}$. Consequently,
$$w_k = \lim w_{n, k} \leq \lim u_n = u.$$ This implies that
\begin{eqnarray}
\nu_k ^{\#} = tr\, w_k \leq tr\, u \leq \nu^{\#}.\label{eq48}
\end{eqnarray}
Finally, from ($\ref{eq47}$) and ($\ref{eq48}$), we get
\begin{eqnarray}
\alpha_k (\nu + \frac{\tau}{{\bf{n}} \cdot {\bf{n}}A^T} \chi_{\partial\Omega}) \leq \sigma_k
\chi_{\partial\Omega} + \nu^{\#}.\label{eq49}
\end{eqnarray}
Since $g$ satisfies $(\ref{geq})$, hence for every $\epsilon >0$
there exists $a_0, t_0 >1$, such that
\begin{eqnarray}
\frac{g(x, at)}{a\,g(x, t)}\geq \frac{1}{\epsilon}\,, \,\,\,
\forall\, a\geq a_0,\, t\geq t_0. \label{eq50}
\end{eqnarray}
We split $\rho(g\circ \alpha_k v_n)$ as follows:
$$\rho(g\circ \alpha_k v_n) = \rho(g\circ \alpha_k v_n)\chi_{[\alpha_k
v_n <t_0]} + \rho(g\circ \alpha_k v_n) \chi_{[\alpha_k v_n \geq
t_0]}.$$ Now as $\rho(g\circ \alpha_k v_n)
\xrightharpoonup[\bar{\Omega}]{}\sigma_k$ , hence let us say
$$ \rho(g\circ \alpha_k v_n)\chi_{[\alpha_k
v_n <t_0]} \xrightharpoonup[\bar{\Omega}]{}\sigma_{1,k}\,\,\,\,
\mbox{and}\,\, \rho(g\circ \alpha_k v_n) \chi_{[\alpha_k v_n \geq
t_0]} \xrightharpoonup[\bar{\Omega}]{}\sigma_{2,k}.$$
 Since $\{ \rho(g\circ \alpha_k v_n)\chi_{[\alpha_k
v_n <t_0]}\}$ is uniformly bounded by $\rho (g\circ t_0)$, we have
$\sigma_{1, k} \chi_{\partial\Omega} =0 $. Thus $\sigma_k
\chi_{\partial\Omega}= \sigma_{2, k} \chi_{\partial\Omega}$. But
$$ ||\sigma_{2, k}||_{\mathfrak{M}(\bar{\Omega})} \leq \lim \inf
\int_{[\alpha_k v_n \geq t_0]} \rho (g \circ \alpha_k v_n).$$
Therefore we obtain,
$$||\sigma_k \chi_{\partial\Omega}||_{\mathfrak{M}(\partial\Omega)} \leq \lim \inf
\int_{[\alpha_k v_n \geq t_0]} \rho (g \circ \alpha_k v_n).$$ Now as
$\alpha_k \downarrow 0$, hence for sufficiently large $k$, say $k
\geq k_\epsilon$,  $\frac{1}{\alpha_k}\geq a_0$ we apply
$(\ref{eq50})$ with $a= \frac{1}{\alpha_k}$, $t= \alpha_k v_n$ to
get
$$\rho(g\circ \alpha_k v_n) \chi_{[\alpha_k v_n \geq t_0]} \leq
\alpha_k \epsilon (g \circ v_n),$$ for $k\geq k_\epsilon$ and $n\geq
1$. Hence
$$||\sigma_k \chi_{\partial\Omega}||_{\mathfrak{M}(\partial\Omega)} \leq
 \epsilon\alpha_k \lim \inf \int_\Omega \rho(g\circ v_n) \leq c_0 \epsilon
\alpha_k $$ for all $k\geq k_\epsilon$. Therefore
\begin{eqnarray}
\frac{||\sigma_k
\chi_{\partial\Omega}||_{\mathfrak{M}(\partial\Omega)}}{\alpha_k}\rightarrow
0\,,\,\,\, \mbox{as}\,\, k\rightarrow \infty.\label{eq51}
\end{eqnarray}
 To complete the
proof we will show that $\nu +\tau \chi_{\partial\Omega}$ is
absolutely continuous with respect to measure $\nu^{\#}$. For this
let $E\subset
\partial \Omega$ be a Borel set such that $\nu^{\#}(E) =0$. Then by
$(\ref{eq49})$,
$$\alpha_k(\nu(E) + \frac{\tau}{{\bf{n}}\cdot {\bf{n}}A^T}(E)) \leq \sigma_k(E)\,,\,\, \forall\,k\geq
1$$ This inequality and $(\ref{eq51})$ implies that
$$\nu(E) +\frac{\tau}{{\bf{n}}\cdot {\bf{n}}A^T}(E) \leq \frac{\sigma_k(E)}{\alpha_k} \leq \frac{|\sigma_k
\chi_{\partial\Omega}|(E)}{\alpha_k} \rightarrow 0$$ as
$k\rightarrow \infty$. Thus $\displaystyle{\nu(E) +\frac{\tau}{{\bf{n}}\cdot {\bf{n}}A^T}(E) =0}$. Hence the theorem.
\end{proof}
\section{Conclusions} The semilinear elliptic boundary value problem involving the general linear second order elliptic operator with
a nonlinear function and Radon measures has been studied. Although the existence of very weak solution may fail for general measure data input, we however proved that the boundary value problem considered here with $L^1$ data possesses a unique very weak solution.  We investigated the so-called reduced limits of the sequences $\{\mu_n,\nu_n\}$ of measures for a general linear elliptic operator $L$.  It is showed that the reduced limits strictly depends not only on the sequence of input measure datum but also on the elliptic differential operator $L$. We also gave the relation between the weak limit and the reduced limits of sequences of the given measures.
\bibliographystyle{amsplain}

\end{document}